\titleformat{\section}{\large\scshape\centering}{\thesection.}{1em}{}
\titleformat{\subsection}{\normalsize\bfseries}{\thesubsection}{1em}{}
\newtheorem{theorem}{Theorem}[section]
\newtheorem{lemma}[theorem]{Lemma}
\newtheorem{definition}{Definition}
\newtheorem{proposition}{Proposition}
\newtheorem{corollary}{Corollary}
\newtheorem{remark}{Remark}
\newtheorem{example}{Example}
\newcommand{\N}{\mathbb{N}}
\newcommand{\Z}{\mathbb{Z}}
\newcommand{\R}{\mathbb{R}}
\newcommand{\C}{\mathbb{C}}
\newcommand{\bit}{\text{bit}}
\newcommand{\famlyF}{\mathscr{F}}
\newcommand{\famlyG}{\mathscr{G}}
\begin{document}
\pagestyle{plain} 
\lineskip=1.8pt\baselineskip=18pt\lineskiplimit=0pt 

\title{On Sets of Large Fourier Transform Under Changes in Domain}

\author{Joel Laity and Barak Shani \\ \small{Department of Mathematics, University of Auckland, New Zealand}}

\date{}
\maketitle

\begin{abstract}
A function $f:\Z_n \to\C$ can be represented as a linear combination $f(x)=\sum_{\alpha \in \Z_n}\widehat{f}(\alpha) \chi_{\alpha,n}(x)$ where $\widehat{f}$ is the (discrete) Fourier transform of $f$. Clearly, the basis $\{\chi_{\alpha,n}(x):=\exp(2\pi i \alpha x/n)\}$ depends on the value $n$.

We show that if $f$ has ``large'' Fourier coefficients, then the function $\widetilde{f}:\Z_m \to \C$, given by
\[
	\widetilde{f}(x)
	= \begin{cases}
		f(x)     	& \text{when } 0\leq x < \min(n, m), \\
		0			& \text{otherwise},
	\end{cases}
\]
also has ``large'' coefficients.
Moreover, they are all contained in a ``small'' interval around $\lfloor \frac{m}{n}\alpha \rceil$ for each $\alpha \in \Z_n$ such that $\widehat{f}(\alpha)$ is large.
One can use this result to recover the large Fourier coefficients of a function $f$ by redefining it on a convenient domain. One can also use this result to reprove a result by Morillo and R{\`a}fols: \emph{single-bit} functions, defined over any domain, have a small set of large coefficients.
\end{abstract}

\section{Introduction}

In recent years different areas of mathematics, such as additive number theory \cite{Green}, combinatorial number theory \cite{Shkredov} and cryptography \cite{MR}, have seen results that take advantage of the structure arising from large values of the Fourier transform.
Most notably, the quantum algorithm for period finding given by Shor \cite{Shor} is an application that exploits this structure.

In this paper we are concerned with what happens to the Fourier coefficients of a function when we extend or restrict its domain.
Our convention is to set the function to be zero on the new added values.

\begin{definition}
Let $m, n\in \N$.
Let $f:\Z_n\to \C$.
Define $\widetilde{f}:\Z_m\to \C$ by
\[
	\widetilde{f}(x)
	= \begin{cases}
		f(x)     	& \text{when } 0\leq x < \min(n, m), \\
		0			& \text{otherwise}.
	\end{cases}
\]
\end{definition}

In this paper we will establish a relationship between the large Fourier coefficients of $f$ and $\widetilde{f}$.
Our results are motivated by the following:
\begin{enumerate}
  \item When $m>n$, our results have some relevance to approximations of functions.
  \\
  There is a broad area of research, mostly in the computer science and engineering communities, that studies algorithms to recover the set of points for which the Fourier transform of a given function is large (see \cite{Fourier_survey} for a recent survey).
  Such algorithms have been extensively studied and optimized for domains of size a power of $2$.
  \\
  Therefore, our motivation is to show that if the large Fourier coefficients of $\widetilde{f}$ can be recovered for $m=2^k$, one can then determine the set of large Fourier coefficients of $f$.

  \item When $m<n$, our results apply to functions that can be naturally defined over any domain and the families of functions they give rise to $\{f_n:\Z_n\to\C\}_{n\in\N}$.
  \\
  One example of such a family is to take a periodic function $f:\Z\to\C$ and restrict its domain by defining $f_n(x)=f(x)$ for all $x\in \Z_n$. Another example is the family of most-significant-bit (MSB) functions: for $2^k<n\leq2^{k+1}$, MSB$(x)=0$ if $x<2^k$ and MSB$(x)=1$ otherwise. It turns out that the analysis of the Fourier transform for these functions is much easier on some domains than others.
  \\
  Thus, our motivation is to show that by analyzing the large Fourier coefficients of only a small proportion of functions in the family, one can analyze the large Fourier coefficients of all the functions in the family.
\end{enumerate}

\subsection{Definitions and Results}

We first give some definitions as well as some basic properties of the Fourier transform.

We use $\lfloor x \rceil$ to denote the closest integer to $x\in \R$ and we use $\|x\|$ for the distance between $x$ and its closest integer, i.e. $\|x\|=| \lfloor x \rceil - x |$. Let $k \in \N$, for all $x \in \R$ define $|x|_k = \min \{|x-kz| \mid z\in \Z\}$, so $|x|_k$ is the distance from $x$ to the nearest integer multiple of $k$. Equivalently we can define $|x|_k = k\| x/k \| $.

The group $\Z_n$ is the set $\{0, 1, \dots , n-1\}$ under the group operation of addition modulo $n$. The set $L^2(\Z_n) = \{f:\Z_n\to \C\}$ of all functions from $\Z_n$ to $\C$ forms a vector space under the usual addition and scalar multiplication of functions.
This space has the inner product $\langle f, g\rangle = \frac{1}{n}\sum_{x\in \Z_n}f(x)\overline{g(x)}$, where $\overline{z}$ denotes the complex conjugate of $z\in \C$,  and norm $\|f\|_2 = \sqrt{\langle f, f\rangle}$.

For each $\alpha\in \Z_n$ the additive character $\chi_{\alpha,n}$ is given by $\chi_{\alpha, n}(x) = \omega_n^{\alpha x}$, where $\omega_n = \exp(2\pi i /n)$ is the $n$-th root of unity.
When it is clear from context what the domain of a character is, we will omit the second index and use $\chi_\alpha$ to denote $\chi_{\alpha , n}$.
The set of all characters $\{\chi_{\alpha}\}_{\alpha\in \Z_n}$ forms an orthonormal basis for $L^2(\Z_n)$; thus any function $f:\Z_n\to \C$ can be written as
\begin{equation} \label{eq:fourier equation}
	f(x) 		= 		\sum_{\alpha\in\Z_n}\widehat{f}(\alpha)\chi_{\alpha}(x) \, ,
\end{equation}
 where $\widehat{f}(\alpha)\in \C$ is the $\alpha$-th \emph{Fourier coefficient}.
 Equation (\ref{eq:fourier equation}) implicitly defines a function $\widehat{f}:\Z_n\to \C$, called the Fourier transform of $f$.
 It can be explicitly calculated using the formula
\[
	\widehat{f}(\alpha)
	= \langle f, \chi_{\alpha} \rangle
	= \frac{1}{n}\sum_{x\in\Z_n}f(x)\overline{\chi_{\alpha}(x)} \, .
\]
One of the most fundamental identities in Fourier analysis, and one used throughout this paper, is Parseval's identity:
\[
	\frac{1}{n}\sum_{x\in \Z_n}|f(x)|^2
	= \|f\|_2^2
	= 	\sum_{\alpha \in \Z_n}|\widehat{f}(\alpha)|^2 \, .
\]

Given a function $f:\Z_n\to \C$ and a set $\Gamma \subseteq \Z_n$ we use the notation $f|_\Gamma$ to denote the projection of $f$ onto the subspace $\operatorname{span}\{\chi_{\alpha} \mid \alpha \in \Gamma\}\subseteq L^2(\Z_n)$; that is $f|_\Gamma$ is a function from $\Z_n$ to $\C$ and $f|_\Gamma(x) := \sum_{\alpha \in \Gamma}\widehat{f}(\alpha) \chi_{\alpha}(x)$.
Note that this definition of $f|_\Gamma$ is different from the usual meaning of the notation.

A function is \emph{$k$-sparse} if $\widehat{f}(\alpha)$ is nonzero for at most $k$ values of $\alpha \in \Z_n$.
In this paper we are concerned with two types of functions.
The first is of functions with Fourier coefficients whose magnitude is large \emph{relative to the function's norm}.
The second type is of functions that can be approximated, up to any error term, in the form $f|_\Gamma$ where $\Gamma$ has small cardinality.
We call these functions concentrated.

\begin{definition} \label{def:eps-concentration}
	Let $\{n_k\}_{k\in \N}$ be a sequence of positive integers.
	Let $\epsilon > 0$.
	A family of functions $\{f_k:\Z_{n_k}\to \C \}_{k\in \N}$ is \emph{$\epsilon$-concentrated} if there exists a polynomial $P\in \R[x]$ such that for all $k\in \N$ there exists a set $\Gamma_k\subseteq \Z_{n_k}$ where $|\Gamma_k|\leq P(\log(n_k))$ and $\|f_k-f_k |_{\Gamma_k} \|_2^2 <\epsilon$.
\end{definition}

\begin{definition} \label{def:concentration}
	Let $\{n_k\}_{k\in \N}$ be a sequence of positive integers.
	A family of functions $\{f_k:\Z_{n_k}\to \C \}_{k\in \N}$ is \emph{concentrated} if there exists a bivariate polynomial $P\in \R[x, y]$ such that for all $k\in \N$ and for all $\epsilon > 0$ there exists a set $\Gamma_k\subseteq \Z_{n_k}$ where $|\Gamma_k|\leq P(\log(n_k),1/\epsilon)$ and $\|f_k-f_k |_{\Gamma_k} \|_2^2 <\epsilon$.
\end{definition}

Roughly speaking, Definition \ref{def:concentration} says that a family of functions is concentrated if it can be approximated up to any error term by polynomially many characters (with respect to the domain size and the approximation error).

Most applications are concerned with a single function that implicitly defines the entire family for all possible domains (that is, $n_k=k$ or $n_k=c+k$ for some constant $c$).
In this case we abuse notation and say that the function, instead of the family, is concentrated.
Here are some examples.

\begin{example}\label{ex:concentrted}~

\begin{itemize}
  \item A single character defines a concentrated family. More precisely if we fix $\alpha\in \N$ the family of functions $ \{\chi_{\alpha}:\Z_n \to \C\}_{n>\alpha}$ is concentrated.
 	For every integer $n>\alpha$ we can choose $\Gamma_n = \{\alpha\}$, then $\|\chi_{\alpha} - \chi_{\alpha}|_{\Gamma_n}\|_2^2 = 0$ and hence the definition is satisfied with the constant polynomial $P=1$.

  \item Single-bit functions on domains of size powers of $2$ are concentrated families. More precisely, let $k \in \N$ and $0 \le i < k$. Define $\bit_i : \Z_{2^k} \to \{-1,1\}$ by $\bit_i(x) = (-1)^{x_i}$ where $x = \sum_{j=0}^{k-1} x_j 2^j$ and $x_j \in \{ 0,1 \}$. Then, as we show in Lemma~\ref{lemma:bits} below, the family $\mathscr{F}_i = \{\bit_i:\Z_{2^k} \to \{-1,1\}\}_{k>i}$ is concentrated for every $i\in \N$.

      Moreover, as a corollary of our result in Section \ref{sec:conc_proof}, we show that single-bit functions are concentrated over every domain.

  \item The family $\{f_n:\Z_n\to \C\}_{n\in \N}$ where $\widehat{f}_n(\alpha) = 1$ and $\widehat{f}_n(\alpha') = 1/\sqrt{n}$ for all $\alpha' \neq \alpha$ is $\epsilon$-concentrated for $\epsilon = 1$.
       We can choose $\Gamma_n = \{\alpha\}$ for every $n\in \N$ then $\|f_n - f_n|_{\Gamma_n}\|_2^2 = (n-1)(\frac{1}{\sqrt{n}})^2 = 1-1/n < \epsilon$.
       However this family is not $\epsilon$-concentrated for any $\epsilon < 1$ since even if $\alpha \in \Gamma_n$ we have $\|f_n-f_n|_{\Gamma_n}\|_2^2 = \sum_{\alpha' \in\Z_n, \alpha' \not \in \Gamma_n}|\widehat{f}_n(\alpha')|^2 = (n-|\Gamma_n|)(\frac{1}{\sqrt{n}})^2 = 1-|\Gamma_n|/n$.
       Hence if $|\Gamma_n|$ is polynomial in $\log(n)$ then $|\Gamma_n|/n < 1-\epsilon$ for sufficiently large $n$, so $\|f_n-f_n|_{\Gamma_n}\|_2^2 = 1-|\Gamma_n|/n > \epsilon$.

       Functions with similar Fourier transform to this example arose for example from `noisy' characters, given by $f(x):=\omega_n^{\alpha x+e(x)}$ for some suitable random bounded functions $e$.
\end{itemize}
\end{example}

In this work, we study how the large Fourier coefficients of the function $f$ are related to those of $\widetilde{f}$.
It is clear that when $m$ is much smaller than $n$, we lose a lot of information about the function $f$ (as shown in Example \ref{ex:switch_down} below).
In order to avoid extreme cases of unbalanced functions\footnote{An example of such extreme case is a function where $f(0)=n$ and otherwise $f(x)=0$; then $\|f\|_2^2=1$, and if one takes $m=1$ then $\|\widetilde{f}\|_2^2=n$.}, in our main theorems we restrict the domain $\Z_m$ such that $m \geq n/2$.
We remark that our results also hold when $m \geq n/poly(\log(n))$.\footnote{Notice that by taking $m \gg n$ some of the results hold trivially.}

Our first result treats concentrated families.

\begin{theorem}\label{thm:concentrated}
Let $\{n_k\}_{k\in\N},\{m_k\}_{k\in\N}$ two sequences of positive integers with $m_k \geq n_k/2$ for every $k\in\N$.
Let $Q\in \R[x]$ be a polynomial.
Let $\{f_k:\Z_{n_k}\to\C\}_{k\in\N}$ be a concentrated family of functions such that $\|f_k\|_2^2 \leq Q(\log(n_k))$ for all $k\in \N$.
Then $\{\widetilde{f_k}:\Z_{m_k}\to\C\}_{k\in\N}$ is a concentrated family of functions.
\end{theorem}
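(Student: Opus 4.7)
The plan is to combine the family concentration of $\{f_k\}$ with the paper's main theorem---that each large Fourier coefficient of $f$ at $\alpha$ produces a small cluster of Fourier mass of $\widetilde{f}$ localised around $\lfloor \tfrac{m}{n}\alpha\rceil$---in order to transfer concentration from $f_k$ to $\widetilde{f_k}$. Fix $\epsilon>0$; the goal is to construct a set $\widetilde{\Gamma_k}\subseteq \Z_{m_k}$ of cardinality polynomial in $\log(m_k)$ and $1/\epsilon$ such that $\|\widetilde{f_k}-\widetilde{f_k}|_{\widetilde{\Gamma_k}}\|_2^2<\epsilon$.

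First I would pick an auxiliary parameter $\epsilon'>0$ (to be tuned later) and apply the concentration of $\{f_k\}$ to obtain $\Gamma_k\subseteq \Z_{n_k}$ of size at most $P(\log n_k, 1/\epsilon')$ with $\|h_k\|_2^2<\epsilon'$, where $h_k := f_k - f_k|_{\Gamma_k}$ and $g_k := f_k|_{\Gamma_k}$. Since $f\mapsto \widetilde{f}$ is linear, $\widetilde{f_k}=\widetilde{g_k}+\widetilde{h_k}$. A direct computation from the definition shows $\|\widetilde{h_k}\|_2^2 \leq \tfrac{n_k}{m_k}\|h_k\|_2^2 \leq 2\epsilon'$ (the factor arises from the different normalisations of the inner products on $\Z_{m_k}$ and $\Z_{n_k}$, and the hypothesis $m_k\geq n_k/2$ keeps it a constant). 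Hence the tail is already well controlled in the new domain, independently of any Fourier analysis.

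Next, for the main piece $\widetilde{g_k} = \sum_{\alpha\in\Gamma_k}\widehat{f_k}(\alpha)\,\widetilde{\chi_{\alpha,n_k}}$, I would invoke the main theorem individually on each character $\chi_{\alpha,n_k}$ (which is itself a function with a single unit-magnitude coefficient). For any parameter $\delta>0$ this yields an interval $I_\alpha\subseteq \Z_{m_k}$ around $\lfloor \tfrac{m_k}{n_k}\alpha\rceil$, of size polynomial in $\log(m_k)$ and $1/\delta$, such that $\|\widetilde{\chi_{\alpha,n_k}}-\widetilde{\chi_{\alpha,n_k}}|_{I_\alpha}\|_2^2<\delta$. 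Set $\widetilde{\Gamma_k} := \bigcup_{\alpha\in\Gamma_k} I_\alpha$; its size is at most $|\Gamma_k|\cdot\max_\alpha|I_\alpha|$, still polynomial. Triangle inequality coefficient by coefficient, then Cauchy--Schwarz with Parseval and the hypothesis $\|f_k\|_2^2\leq Q(\log n_k)$, give
\[
\|\widetilde{g_k}-\widetilde{g_k}|_{\widetilde{\Gamma_k}}\|_2 \;\leq\; \sum_{\alpha\in\Gamma_k}|\widehat{f_k}(\alpha)|\sqrt{\delta} \;\leq\; \sqrt{|\Gamma_k|\cdot Q(\log n_k)\cdot\delta}\,.
\]

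Finally I would balance parameters: take $\epsilon'$ of order $\epsilon$ so that $\|\widetilde{h_k}\|_2^2 \leq \epsilon/4$, and take $\delta$ of order $\epsilon/(|\Gamma_k|\,Q(\log n_k))$ so that $\|\widetilde{g_k}-\widetilde{g_k}|_{\widetilde{\Gamma_k}}\|_2^2 \leq \epsilon/4$. The triangle inequality then yields $\|\widetilde{f_k}-\widetilde{f_k}|_{\widetilde{\Gamma_k}}\|_2^2<\epsilon$, while $1/\epsilon'$ and $1/\delta$ remain polynomial in $\log n_k$ and $1/\epsilon$, so $|\widetilde{\Gamma_k}|$ is polynomial in $\log(m_k)$ and $1/\epsilon$ (using $m_k\geq n_k/2$ to equate $\log m_k$ and $\log n_k$ up to a constant). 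The main obstacle is precisely this calibration: one needs the main theorem's interval size to depend \emph{polynomially} on $1/\delta$, and the norm hypothesis $\|f_k\|_2^2\leq Q(\log n_k)$ is what keeps the Cauchy--Schwarz loss factor $\sqrt{|\Gamma_k|\cdot\|f_k\|_2^2}$ polynomially bounded. Without the polynomial norm bound the required $\delta$ would be super-polynomially small and the union of intervals would blow up beyond polynomial size.
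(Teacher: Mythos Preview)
Your proposal is correct and follows essentially the same route as the paper. The paper packages the argument through a chain of propositions (single character $\to$ $|\Gamma|$-sparse function via Cauchy--Schwarz $\to$ general function via the split $f=f|_\Gamma+f|_{\Z_n\setminus\Gamma}$), arriving at the bound $\|g-g|_{\Gamma'}\|_2^2\le t\epsilon+\epsilon'+2\sqrt{t\epsilon\epsilon'}$ and then choosing $\epsilon'=\epsilon/6$; your write-up performs the same decomposition and the same Cauchy--Schwarz/interval-size balancing in one pass, with the triangle inequality on norms replacing the paper's explicit cross-term. The estimates and the final polynomial bookkeeping coincide up to constants.
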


Furthermore, we show that there is a simple relationship between the sets $\Gamma_k$ on which $f_k$ are concentrated and the sets on which $\widetilde{f_k}$ are concentrated.
This leads to an algorithmic approach to approximate a function $f$, which we treat in Section~\ref{sec:algorithms}: by finding an approximation of $\widetilde{f}$ with a sparse function, one can translate it to approximate $f$ with a sparse function.

Our second result treats any function $f$ with large Fourier coefficients.

\begin{theorem}\label{thm:e-concentrated}
Let $\{n_k\}_{k\in\N},\{m_k\}_{k\in\N}$ two sequences of positive integers with $m_k \geq n_k/2$ for every $k\in\N$.
Let $t:= \sup_{k\in\N}\{n_k/m_k\}$.
Let $Q\in \R[x]$ be a polynomial.
Let $\{f_k:\Z_{n_k}\to\C\}_{k\in\N}$ be a family of functions such that $\|f_k\|_2^2 \leq Q(\log(n_k))$ for all $k\in \N$.
Then if $\{f_k:\Z_{n_k}\to\C\}_{k\in\N}$ is an $\epsilon$-concentrated family of functions, the family $\{\widetilde{f_k}:\Z_{m_k}\to\C\}_{k\in\N}$ is a $(t\epsilon+\eta)$-concentrated family of functions for any $\eta>0$.
\end{theorem}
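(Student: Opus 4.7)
The plan is to exploit linearity of the zero-extension map $f\mapsto\widetilde{f}$ and apply the paper's main structural result (the concentration of $\widehat{\widetilde{\chi_{\alpha,n}}}$ in a small window of $\Z_m$ around $\lfloor\tfrac{m}{n}\alpha\rceil$) character-by-character to the ``signal'' part of $f_k$, while handling the remaining ``noise'' part by a direct $L^2$ bound.

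By $\epsilon$-concentration, fix $\Gamma_k\subseteq\Z_{n_k}$ with $|\Gamma_k|\le P(\log n_k)$ and set $g_k:=f_k-f_k|_{\Gamma_k}$, so that $\|g_k\|_2^2<\epsilon$. Linearity gives $\widetilde{f_k}=\widetilde{f_k|_{\Gamma_k}}+\widetilde{g_k}$. The noise piece is immediate from the definitions:
\[
\|\widetilde{g_k}\|_2^2 \;=\; \frac{1}{m_k}\sum_{x=0}^{\min(n_k,m_k)-1}|g_k(x)|^2 \;\le\; \frac{n_k}{m_k}\,\|g_k\|_2^2 \;\le\; t\epsilon.
\]

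For the signal piece, fix a parameter $\eta'>0$ (to be chosen at the end). For each $\alpha\in\Gamma_k$, invoke the structural result on the single character $\chi_{\alpha,n_k}$ to produce a window $W_\alpha\subseteq\Z_{m_k}$ centred at $\lfloor\tfrac{m_k}{n_k}\alpha\rceil$, of width polynomial in $\log m_k$ and $\log(1/\eta')$, such that $\|\widetilde{\chi_{\alpha,n_k}}-\widetilde{\chi_{\alpha,n_k}}|_{W_\alpha}\|_2^2<\eta'$. Set $\widetilde{\Gamma}_k:=\bigcup_{\alpha\in\Gamma_k}W_\alpha$; its cardinality is polynomial in $\log m_k$ (with constants depending on $\eta'$). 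The triangle inequality, followed by Cauchy--Schwarz and Parseval's identity, yields
\[
\|\widetilde{f_k|_{\Gamma_k}}-\widetilde{f_k|_{\Gamma_k}}|_{\widetilde{\Gamma}_k}\|_2 \;\le\; \sqrt{\eta'}\sum_{\alpha\in\Gamma_k}|\widehat{f_k}(\alpha)| \;\le\; \sqrt{\eta'\,|\Gamma_k|\,Q(\log n_k)}.
\]

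Combining the two bounds via one more triangle inequality and squaring gives
\[
\|\widetilde{f_k}-\widetilde{f_k}|_{\widetilde{\Gamma}_k}\|_2^2 \;\le\; t\epsilon \;+\; 2\sqrt{t\epsilon\,\eta'\,|\Gamma_k|\,Q(\log n_k)} \;+\; \eta'\,|\Gamma_k|\,Q(\log n_k).
\]
The main obstacle is the cross-term, but it is absorbed by tightening $\eta'$ so that both of the last two summands are at most $\eta/2$; this only requires $1/\eta'$ polynomial in $\log n_k$ and $1/\eta$. Because $m_k\ge n_k/2$, the window widths (and hence $|\widetilde{\Gamma}_k|$) remain polynomial in $\log m_k$, establishing $(t\epsilon+\eta)$-concentration of $\{\widetilde{f_k}\}$ in the sense of Definition~\ref{def:eps-concentration}.
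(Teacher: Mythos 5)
Your proposal is correct and follows essentially the same route as the paper: the same decomposition $\widetilde{f_k}=\widetilde{f_k|_{\Gamma_k}}+\widetilde{g_k}$, the same $L^2$ bound $\|\widetilde{g_k}\|_2^2\le t\epsilon$, the same single-character window lemma applied to each $\alpha\in\Gamma_k$, and the same absorption of the cross term by shrinking the auxiliary parameter. The only difference is cosmetic: you bound the sparse part by a triangle inequality in the $L^2$ norm followed by Cauchy--Schwarz on $\sum_\alpha|\widehat{f_k}(\alpha)|$, where the paper's Proposition~\ref{prop:Gamma-sparse} applies Cauchy--Schwarz coefficientwise in $\beta$; these yield the same bound (and note the window width is of order $1/\eta'$, not $\log(1/\eta')$, which your later accounting already treats correctly).
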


\section{Preparation}
We first show how to express the Fourier transform of $\widetilde{f}$ in terms of the Fourier coefficients of $f$.

\begin{lemma}\label{lemma:ghat}
Let $m, n\in \N$.
Let $f:\Z_n\to \C$.
Let $\ell := \min(n,m)$.
Let $g:\Z_m\to \C$ be defined by $g(x) = f(x) $ for $0 \leq x < \ell$ and $g(x) = 0$ otherwise.
Then
\begin{equation} \label{eq:ghat}
	\widehat{g}(\beta)
	= \sum_{\alpha \in \Z_n} \widehat{f}(\alpha) \frac{1}{m} \sum_{x \in \Z_\ell} \omega_{m}^{(\frac{m}{n}\alpha-\beta) x}
\end{equation}
for all $\beta \in \Z_m$.
\end{lemma}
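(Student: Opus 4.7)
The plan is a direct computation that expands $\widehat{g}(\beta)$ using its defining formula, inserts the Fourier expansion of $f$ into the sum, and then swaps the order of summation.

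I would start from the definition
\[
\widehat{g}(\beta) = \frac{1}{m}\sum_{x\in\Z_m} g(x)\overline{\chi_{\beta,m}(x)} = \frac{1}{m}\sum_{x=0}^{\ell-1} f(x)\,\omega_m^{-\beta x},
\]
where the sum collapses to $x\in\{0,\dots,\ell-1\}$ because $g$ vanishes outside this range by construction. Next, I would replace $f(x)$ by its Fourier expansion \eqref{eq:fourier equation} on $\Z_n$, namely $f(x)=\sum_{\alpha\in\Z_n}\widehat{f}(\alpha)\omega_n^{\alpha x}$.

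The key bridging step is to rewrite the character $\omega_n^{\alpha x}$ of $\Z_n$ as a character of $\Z_m$ via the identity
\[
\omega_n^{\alpha x} = \exp\!\Bigl(\tfrac{2\pi i \alpha x}{n}\Bigr) = \exp\!\Bigl(\tfrac{2\pi i}{m}\cdot\tfrac{m}{n}\alpha x\Bigr) = \omega_m^{\frac{m}{n}\alpha x},
\]
which holds even though $\tfrac{m}{n}\alpha$ is generally not an integer (so $\tfrac{m}{n}\alpha \bmod m$ is not an element of $\Z_m$, but the complex exponential is still well-defined). Combining $\omega_m^{\frac{m}{n}\alpha x}$ with $\omega_m^{-\beta x}$ gives $\omega_m^{(\frac{m}{n}\alpha-\beta)x}$, and swapping the finite sums over $\alpha$ and $x$ yields exactly the right-hand side of \eqref{eq:ghat}.

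I do not expect any real obstacle here: the argument is essentially bookkeeping with roots of unity, and the only subtlety is being careful that $\tfrac{m}{n}\alpha$ may be non-integral, which affects only the interpretation of the exponent (as a real number in the exponential) but not the validity of the manipulation. Everything else is the standard inner-product definition of $\widehat{g}$ and a swap of two finite sums.
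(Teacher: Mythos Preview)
Your proposal is correct and follows essentially the same approach as the paper's proof: both start from the definition of $\widehat{g}(\beta)$, restrict the sum to $\Z_\ell$, insert the Fourier expansion of $f$, use the identity $\omega_n^{\alpha x}=\omega_m^{(m/n)\alpha x}$, and swap the finite sums. Your explicit remark that $\tfrac{m}{n}\alpha$ need not be an integer is a nice clarification that the paper leaves implicit.
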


\begin{proof}
Recall that $f(x) = \sum_{\alpha \in \Z_n} \widehat{f}(\alpha) \chi_{\alpha }(x)$. So
\[
	\begin{split}
		\widehat{g}(\beta)
		& = \frac{1}{m} \sum_{x \in \Z_m} g(x) \overline{\omega_m^{\beta x}}
		= \frac{1}{m} \sum_{x \in \Z_\ell} f(x) \omega_m^{-\beta x} \\
		& = \frac{1}{m} \sum_{x \in \Z_\ell} \sum_{\alpha \in \Z_n} \widehat{f}(\alpha) \omega_n^{\alpha x} \omega_m^{-\beta x}
		=  \sum_{\alpha \in \Z_n} \widehat{f}(\alpha) \frac{1}{m} \sum_{x \in \Z_\ell} \omega_{m}^{(\frac{m}{n}\alpha-\beta) x} \, .
	\end{split}
\]
\end{proof}

Lemma \ref{lemma:ghat} tells us that the Fourier coefficient $\widehat{g}(\beta)$ is a weighted sum of the Fourier coefficients of $f$.
The weights are given by the exponential sum $\frac{1}{m} \sum_{x \in \Z_\ell} \omega_{m}^{(\frac{m}{n}\alpha-\beta) x}$, the magnitude of which is inversely proportional to the distance between $\frac{m}{n}\alpha$ and $\beta$ (we will give a precise statement and proof in Lemma \ref{lemma:bound_character}).
Hence if $\beta$ is very far from $\frac{m}{n}\alpha$ then the $\widehat{f}(\alpha)$-term in Equation (\ref{eq:ghat}) will contribute very little to $\widehat{g}(\beta)$.

\begin{lemma}\label{lemma:tilde_properties}
Let $m, n\in \N$.
The mapping $\widetilde{\cdot}:L^2(\Z_n)\to L^2(\Z_m)$ is linear.
Moreover,
\[
	\langle \widetilde{f}, \widetilde{g}\rangle 	= 	\frac{n}{m}\langle f, g\rangle
	\qquad \text{ if $n<m$, \quad and } \qquad
	\|\widetilde{f}\|_2^2 	\leq 	\frac{n}{m}\|f\|_2^2
\]
for all $f, g:\Z_n\to \C$, with equality if $n<m$.
\end{lemma}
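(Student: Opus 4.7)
The plan is to verify each claim by direct computation, splitting into the cases $n \le m$ and $n > m$ according to which branch of the definition of $\widetilde{\cdot}$ applies. Linearity is essentially automatic: for scalars $a,b \in \C$ and functions $f,g:\Z_n\to\C$, the value of $\widetilde{af+bg}$ at any $x \in \Z_m$ agrees with $a\widetilde{f}(x)+b\widetilde{g}(x)$, because on $0 \le x < \min(n,m)$ this is just linearity of pointwise evaluation, and on the remaining points both sides vanish. So I would dispose of linearity in one line and then focus on the inner product / norm identities.

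For the inner product identity, I would assume $n < m$, so that $\min(n,m) = n$. By definition, $\widetilde{f}(x)\overline{\widetilde{g}(x)}$ equals $f(x)\overline{g(x)}$ for $0 \le x < n$ and vanishes elsewhere. Hence
\[
\langle \widetilde{f},\widetilde{g}\rangle
= \frac{1}{m}\sum_{x \in \Z_m}\widetilde{f}(x)\overline{\widetilde{g}(x)}
= \frac{1}{m}\sum_{x=0}^{n-1} f(x)\overline{g(x)}
= \frac{n}{m}\cdot\frac{1}{n}\sum_{x\in\Z_n}f(x)\overline{g(x)}
= \frac{n}{m}\langle f,g\rangle.
\]
Specializing $g=f$ immediately gives the norm equality $\|\widetilde{f}\|_2^2 = \frac{n}{m}\|f\|_2^2$ in the case $n < m$, establishing the ``with equality'' clause.

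For the inequality in general I would handle the remaining case $n \ge m$, where $\min(n,m) = m$ and so $\widetilde{f}$ is just the restriction of $f$ to $\{0,1,\dots,m-1\} \subseteq \Z_n$. Then $\|\widetilde{f}\|_2^2 = \frac{1}{m}\sum_{x=0}^{m-1}|f(x)|^2$, and enlarging the range of summation from $\{0,\dots,m-1\}$ to $\{0,\dots,n-1\}$ only adds non-negative terms, yielding
\[
\|\widetilde{f}\|_2^2 \;\le\; \frac{1}{m}\sum_{x=0}^{n-1}|f(x)|^2 \;=\; \frac{n}{m}\|f\|_2^2 .
\]
Together with the equality in the $n < m$ case, this covers the claim $\|\widetilde{f}\|_2^2 \le \frac{n}{m}\|f\|_2^2$ for all $m,n$.

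There isn't really a hard step here; the only thing to be careful about is keeping track of which index set one is summing over ($\Z_m$ versus $\Z_n$ versus the subset $\{0,\dots,\min(n,m)-1\}$) and which denominator $\frac{1}{n}$ or $\frac{1}{m}$ belongs in the inner product, since the factor $n/m$ arises purely from this change of normalization. The borderline case $n=m$ collapses to $\widetilde{f}=f$ and so is consistent with both clauses.
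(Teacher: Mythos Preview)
Your proof is correct and follows essentially the same approach as the paper: linearity is dismissed as immediate from the definition, the inner product identity for $n<m$ is a one-line computation converting the $\frac{1}{m}$-normalized sum over $\Z_m$ into a $\frac{1}{n}$-normalized sum over $\Z_n$, and the norm inequality comes from noting that $\sum_{x=0}^{m-1}|f(x)|^2 \le \sum_{x=0}^{n-1}|f(x)|^2$. The paper's proof is slightly terser but the logic is identical.
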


\begin{proof}
The linearity follows directly from the definition of $\, \widetilde{\cdot} \,$.
From the definition of the inner product $\langle \widetilde{f}, \widetilde{g}\rangle = \frac{1}{m}\sum_{x = 0}^{m-1}\widetilde{f}(x)\overline{\widetilde{g}(x)} = \frac{1}{m}\sum_{x=0}^{n-1}f(x)\overline{g(x)} = \frac{n}{m}\langle f, g \rangle$, when $n<m$. In addition, we always have $\langle \widetilde{f}, \widetilde{f}\rangle = \frac{1}{m}\sum_{x = 0}^{m-1}\widetilde{f}(x)\overline{\widetilde{f}(x)} \leq \frac{1}{m}\sum_{x=0}^{n-1}f(x)\overline{f(x)} = \frac{n}{m}\langle f, f \rangle$.
The last property now follows since $\|\widetilde{f}\|_2^2 = \langle \widetilde{f}, \widetilde{f}\rangle \leq \frac{n}{m}\langle f, f \rangle = \frac{n}{m}\|f\|_2^2$.
\end{proof}

\begin{remark} \label{rem:inner product}
Another version of Lemma \ref{lemma:ghat} is
\[
\widehat{g}(\beta) = \sum_{\alpha \in \Z_n} \widehat{f}(\alpha) \langle \widetilde{\chi}_{\alpha, n} , \chi_{\beta, m} \rangle \, .
\]
We can also give an alternative proof that uses the properties of $\; \widetilde{\cdot} \, $ instead of explicit calculations.
From the linearity of $\; \widetilde{\cdot}\, $ we get $g = \widetilde{f} = \sum_{\alpha \in \Z_n} \widehat{f}(\alpha) \widetilde{\chi}_{\alpha,n}$.
Then \[\widehat{g}(\beta) = \langle g, \chi_{\beta, m}\rangle  = \langle \sum_{\alpha \in \Z_n}\widehat{f}(\alpha)\widetilde{\chi}_{\alpha,n}, \chi_{\beta, m}\rangle = \sum_{\alpha \in \Z_n} \widehat{f}(\alpha) \langle \widetilde{\chi}_{\alpha, n} , \chi_{\beta, m} \rangle\,.\]
Thus $\widehat{g}(\beta)$ is weighted sum of the Fourier coefficients of $f$ where the weights are $\langle \widetilde{\chi}_{\alpha, n} , \chi_{\beta, m} \rangle$.
\end{remark}

In order to bound the exponential sum in Equation (\ref{eq:ghat}) we will need the following lemma.
\begin{lemma}\label{lemma:weyl}
Let $N\in \N$.
Let $x \in \R\setminus{\Z}$.
Let $e(x):=\exp(2\pi i x)$.
Then
\[
	\sum_{k=0}^{N} e(x k) 	= 		e\left(x N /2\right) \frac{\sin(\pi x (N+1))}{\sin(\pi x)} \, .
\]
\end{lemma}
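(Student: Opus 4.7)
The plan is to treat the sum as a finite geometric series with ratio $r = e(x) = \exp(2\pi i x)$. Since $x \notin \Z$, we have $r \neq 1$, so the standard formula $\sum_{k=0}^{N} r^{k} = (r^{N+1}-1)/(r-1)$ applies directly.

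Once the closed form is in hand, the task is purely cosmetic: rewrite both the numerator and the denominator in a symmetric form suitable for conversion into sines. Concretely, I would factor $e(x(N+1)/2)$ out of $e(x(N+1)) - 1$ and factor $e(x/2)$ out of $e(x) - 1$. This yields
\[
\sum_{k=0}^{N} e(xk)
 = \frac{e\bigl(x(N+1)/2\bigr)}{e(x/2)} \cdot \frac{e\bigl(x(N+1)/2\bigr)-e\bigl(-x(N+1)/2\bigr)}{e(x/2)-e(-x/2)}.
\]
The quotient of the two exponential prefactors simplifies to $e(xN/2)$, matching the claimed phase factor.

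Next I would apply the elementary identity $e(y)-e(-y) = 2i\sin(2\pi y)$ to both the numerator and the denominator. The factors of $2i$ cancel, leaving the ratio $\sin(\pi x(N+1))/\sin(\pi x)$. Combined with the prefactor $e(xN/2)$ this is exactly the asserted identity. The denominator is nonzero precisely because $x \notin \Z$, so the manipulation is valid throughout.

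There is no real obstacle here; the proof is a textbook computation. The only thing to be careful about is the symmetric factoring step, since an off-by-one in the exponents would yield the wrong phase. I would therefore verify the prefactor by checking the trivial cases $N=0$ (both sides equal $1$) and $N=1$ (both sides equal $1 + e(x) = e(x/2)\cdot 2\cos(\pi x) = e(x/2)\sin(2\pi x)/\sin(\pi x)$) before writing up the general argument.
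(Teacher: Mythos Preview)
Your proposal is correct and follows essentially the same approach as the paper: sum the geometric series, factor each of $e(x(N+1))-1$ and $e(x)-1$ symmetrically, and apply $e(y)-e(-y)=2i\sin(2\pi y)$ to obtain the stated identity. The paper's proof is line-for-line the computation you describe (minus your sanity checks at $N=0,1$).
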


\begin{proof}
From the formula for a geometric series
\[
\sum_{k=0}^{N} e(x k)  	= 		\frac{e(x (N+1))-1}{e(x)-1} \, .
\]
It is possible to pull convenient factors out of the numerator and denominator so that what is left is purely imaginary
\[
\frac{e(x (N+1)) - 1}{e(x) - 1} = \frac{e(x (N+1)/2)}{e(x /2)}\frac{e(x (N+1)/2)-e(-x (N+1)/2)}{e(x /2)-e(-x/2)} \, .
\]
We substitute $y =x (N+1)/2$ into the formula $\sin (2\pi y) = (e(y) - e(-y))/(2i)$ and get
\[
\sum_{k=0}^{N} e(x k) =e(x N/2)\frac{\sin(\pi x (N+1))}{\sin(\pi x)} \, ,
\]
as required.
\end{proof}

\begin{lemma}\label{lemma:1/sin}
Let $k \in \N$.
Let $x \in \R$ such that $x$ is not an integer multiple of $k$.
Then
\[
	\frac{1}{\left|\sin(\frac{\pi}{k}x)\right|} \leq  \frac{k}{2|x|_k} \, .
\]
\end{lemma}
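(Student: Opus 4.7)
The plan is to reduce the inequality, via the substitution $y = x/k$, to the standard bound $|\sin(\pi y)| \geq 2\|y\|$ valid for all $y \in \R$, which in turn is a repackaging of Jordan's inequality together with the period-$1$ symmetry of $|\sin(\pi y)|$.

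First, set $y = x/k$. Using the identity $|x|_k = k\|x/k\| = k\|y\|$ stated in the preliminaries, and $\sin(\frac{\pi}{k} x) = \sin(\pi y)$, the claim $\frac{1}{|\sin(\frac{\pi}{k}x)|} \leq \frac{k}{2|x|_k}$ is equivalent to
\[
|\sin(\pi y)| \;\geq\; 2\|y\|.
\]
The hypothesis that $x$ is not an integer multiple of $k$ translates to $y \notin \Z$, so $\|y\| > 0$ and both sides of the original inequality are well-defined.

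Next, I would exploit periodicity. Both $y \mapsto |\sin(\pi y)|$ and $y \mapsto \|y\|$ are $1$-periodic, so I can replace $y$ by $y - \lfloor y \rceil$ and reduce to $y \in [-\tfrac{1}{2}, \tfrac{1}{2}]$; in this range $\|y\| = |y|$. Evenness of both sides then lets me restrict further to $y \in [0, \tfrac{1}{2}]$, where I need to check $\sin(\pi y) \geq 2y$.

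Finally, on $[0, \tfrac{1}{2}]$ the function $\sin(\pi y)$ is concave, since its second derivative $-\pi^2 \sin(\pi y)$ is non-positive there. Hence its graph lies above the chord from $(0, 0)$ to $(\tfrac{1}{2}, 1)$, which is exactly the line $y \mapsto 2y$, giving $\sin(\pi y) \geq 2y$ as required. (Equivalently, one may cite Jordan's inequality $\sin t \geq \frac{2t}{\pi}$ on $[0, \pi/2]$ with $t = \pi y$.) There is no real obstacle here; the only bookkeeping is the change of variables converting $|x|_k$ to $k \cdot 2\|y\|$ in the denominator, after which the result is a two-line consequence of concavity.
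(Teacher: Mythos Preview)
Your proof is correct and follows essentially the same route as the paper: both reduce via $y = x/k$ to the elementary bound $|\sin(\pi y)| \geq 2\|y\|$, which the paper simply quotes (with a footnote appealing to the graphs) while you supply the concavity/Jordan argument explicitly.
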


\begin{proof}
Applying the elementary bound\footnote{The easiest way to see why this is true is by drawing the graphs of both functions.} $|\sin(\pi x)| \geq  2\| x\|$ for all $x\in \R$ we get $1/|\sin(\pi x /k)| \leq 1/(2\| x /k\|) = 1/(2|x|_k/k)$ as required.
\end{proof}

\section{Functions with a single coefficient}

We start our analysis by considering functions $f$ with only one non-zero coefficient, i.e. $f(x) = \widehat{f}(\alpha)\chi_\alpha(x)$. 
In Lemma~\ref{lemma:bound_character} we give an upper bound on the size of each Fourier coefficient of $\widetilde{f}$ and a lower bound on the size of the $\lfloor \frac{m}{n}\alpha \rceil$-th Fourier coefficient of $\widetilde{f}$.

\begin{lemma} \label{lemma:bound_character}
Let $m, n\in \N$.
Let $f:\Z_n\to \C$ be a $1$-sparse function, i.e. $f(x) = \widehat{f}(\alpha)\chi_\alpha(x)$ for some $\alpha \in \Z_n$.
Let $\ell := \min(n,m)$.
Let $g:\Z_m\to \C$ be defined by $g(x) = f(x) $ for $0 \leq x < \ell$ and $g(x) = 0$ otherwise.
If $\beta = \frac{m}{n}\alpha$ then $|\widehat{g}(\beta)| = \frac{\ell}{m}|\widehat{f}(\alpha)|$. For all $\beta\in \Z_m$ with $\beta \neq \frac{m}{n}\alpha$ we have
\begin{equation} \label{eq:upperbound}
		|\widehat{g}(\beta)|
	\leq |\widehat{f}(\alpha)| \min \left( \frac{\ell}{m}, \frac{1}{2 |\frac{m}{n}\alpha - \beta|_m}\right) .
\end{equation}
Moreover if $\beta = \lfloor \frac{m}{n}\alpha \rceil$ then
\begin{equation} \label{eq:lowerbound}
		|\widehat{g}(\beta)|
	\geq |\widehat{f}(\alpha)|	\max\left( 		\frac{2 \ell}{\pi m},  \enskip 	\frac{\ell}{m}\sqrt{1 - \frac{\pi^2 \ell^2}{3m^2}\left|  \frac{m}{n}\alpha  - \beta \right |_m^2} \right) .
\end{equation}
\end{lemma}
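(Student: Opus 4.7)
The starting point is Lemma~\ref{lemma:ghat}, which for the $1$-sparse $f$ collapses into a single term:
\[
\widehat{g}(\beta) \;=\; \widehat{f}(\alpha)\cdot \frac{1}{m}\sum_{x=0}^{\ell-1} \omega_m^{\gamma x}, \qquad \gamma := \tfrac{m}{n}\alpha - \beta.
\]
The case $\gamma=0$ immediately gives the claimed equality $|\widehat{g}(\beta)|=\frac{\ell}{m}|\widehat{f}(\alpha)|$, so I can assume $\gamma\ne 0$ for the remaining bounds. Applying Lemma~\ref{lemma:weyl} with $x=\gamma/m$ and $N=\ell-1$ rewrites the quantity of interest as
\[
|\widehat{g}(\beta)| \;=\; \frac{|\widehat{f}(\alpha)|}{m}\cdot\frac{|\sin(\pi\gamma\ell/m)|}{|\sin(\pi\gamma/m)|},
\]
so the entire proof reduces to bounding this ratio of sines from above and below.

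For the upper bound (\ref{eq:upperbound}), I would combine two independent estimates and take their minimum. The triangle inequality on the exponential sum gives $|\sum_{x=0}^{\ell-1}\omega_m^{\gamma x}|\le \ell$, producing the $\ell/m$ bound. Separately, bounding the numerator by $1$ and applying Lemma~\ref{lemma:1/sin} to the denominator gives $1/|\sin(\pi\gamma/m)|\le m/(2|\gamma|_m)$, which after dividing by $m$ produces the $1/(2|\tfrac{m}{n}\alpha-\beta|_m)$ bound.

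For the lower bound (\ref{eq:lowerbound}), the key preliminary observation is that $\beta=\lfloor\tfrac{m}{n}\alpha\rceil$ forces $|\gamma|\le 1/2$, and since $\ell\le m$ both $\pi\gamma/m$ and $\pi\ell\gamma/m$ lie in $[-\pi/2,\pi/2]$, so $|\gamma|_m=|\gamma|$ and the standard one-sided inequalities on $\sin$ apply in both numerator and denominator. The bound $2\ell/(\pi m)$ then follows from the classical inequality $|\sin x|\ge (2/\pi)|x|$ on $[-\pi/2,\pi/2]$ in the numerator together with $|\sin x|\le |x|$ in the denominator. For the sharper second bound I would use the Taylor estimate $\sin x\ge x-x^3/6$ (for $x\ge 0$, extended by odd symmetry) in the numerator and again $|\sin x|\le |x|$ in the denominator; this yields the strictly stronger polynomial bound $\ell\bigl(1-(\pi\ell\gamma/m)^2/6\bigr)$, and the stated square-root form is then a direct consequence of the elementary algebraic identity $(1-y/6)^2=1-y/3+y^2/36\ge 1-y/3$ applied with $y=(\pi\ell\gamma/m)^2$.

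The main obstacle is bookkeeping rather than depth: one must verify that the arguments of $\sin$ remain in $[-\pi/2,\pi/2]$ so the one-sided inequalities apply, check that $|\gamma|_m=|\gamma|$ under the hypothesis $\beta=\lfloor\tfrac{m}{n}\alpha\rceil$, and recognise that the natural quadratic Taylor bound can be weakened to the square-root form that appears in the statement. Everything else is a direct combination of Lemmas~\ref{lemma:ghat}, \ref{lemma:weyl}, and \ref{lemma:1/sin}.
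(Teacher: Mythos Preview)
Your proposal is correct and follows essentially the same route as the paper: reduce to the geometric sum via Lemma~\ref{lemma:ghat}, evaluate it with Lemma~\ref{lemma:weyl}, and then bound the resulting sine ratio using the triangle inequality and Lemma~\ref{lemma:1/sin} for (\ref{eq:upperbound}), and the elementary inequalities $\tfrac{2}{\pi}|x|\le|\sin x|\le|x|$ on $[-\pi/2,\pi/2]$ for (\ref{eq:lowerbound}). The only cosmetic difference is in the second lower bound: the paper uses the Maclaurin series of $\sin^2(x)$ to get $\sin^2(x)\ge x^2-x^4/3$ directly, whereas you use $\sin x\ge x-x^3/6$ and then square via $(1-y/6)^2\ge 1-y/3$; the two arguments are equivalent.
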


\begin{proof}
From Lemma \ref{lemma:ghat}
\[
|\widehat{g}(\beta)|
= |\widehat{f}(\alpha)|\frac{1}{m}\left| \sum_{x \in \Z_\ell}\omega_{m}^{(\frac{m}{n}\alpha-\beta) x}\right|
= |\widehat{f}(\alpha)|\frac{1}{m}\left| \sum_{x=0}^{\ell-1}\exp\left(\frac{2\pi i x}{m} \left(\frac{m}{n}\alpha-\beta\right)\right)\right|
\leq |\widehat{f}(\alpha)|\frac{\ell}{m}\,.
\]

Since $\alpha\in\Z_n$ and $\beta\in\Z_m$ the expression $(\frac{m}{n}\alpha-\beta)/m$ is an integer if and only if $\beta = \frac{m}{n}\alpha$.
In this case we get that $|\widehat{g}(\beta)| = |\widehat{f}(\alpha)|\frac{\ell}{m}$.
Otherwise, from Lemma \ref{lemma:weyl}
\begin{equation} \label{eq:ghatbeta}
|\widehat{g}(\beta)|
= |\widehat{f}(\alpha)| \frac{1}{m} \left|\frac{\sin\left(\frac{\pi}{m} \left(\frac{m}{n}\alpha-\beta \right)\ell\right)} {\sin\left(\frac{\pi}{m} \left(\frac{m}{n}\alpha-\beta \right)\right)}\right| .
\end{equation}
We bound the right-hand side of this equation using

\begin{equation} \label{eq:ghatbound}
\frac{1}{m} \left|\frac{\sin\left(\frac{\pi}{m} \left(\frac{m}{n}\alpha-\beta \right)\ell\right)} {\sin\left(\frac{\pi}{m} \left(\frac{m}{n}\alpha-\beta \right)\right)}\right|
\leq \frac{1}{m} \frac{1} {\left|\sin\left(\frac{\pi}{m}\left(\frac{m}{n}\alpha - \beta\right)\right)\right|}
\leq \frac{1}{2 |\frac{m}{n}\alpha - \beta|_m} \, ,
 \end{equation}
where the last inequality follows from substituting $x=\frac{m}{n}\alpha - \beta$ into Lemma \ref{lemma:1/sin}. This proves~(\ref{eq:upperbound}).

For the second part let $r = \left|\frac{m}{n}\alpha - \beta \right|_m$ where $\beta = \lfloor\frac{m}{n}\alpha\rceil$.
Then $0 < r \leq 1/2$. We now substitute our new variable, $r$, into the right-hand side of Equation (\ref{eq:ghatbeta}) and bound the resulting expression with the elementary inequalities $\frac{2}{\pi}|x| \leq |\sin x|$ for all  $x\in [-\pi /2, \pi /2]$ and $ |\sin x|\leq |x|$ for all $x\in \R$.
We get
\[
\frac{1}{m} \left|\frac{\sin\left(\frac{\pi}{m} \left(\frac{m}{n}\alpha-\beta \right)\ell\right)} {\sin\left(\frac{\pi}{m} \left(\frac{m}{n}\alpha-\beta \right)\right)}\right|
= \frac{1}{m}\Bigg|\frac{\sin\left(\frac{\pi \ell}{m} r\right)}{\sin\left(\frac{\pi}{m}r\right)}\Bigg|
\geq \frac{2 \ell}{\pi m} \, .
\]

To get a lower bound that depends on $r$ we first observe that, since $\sin^2(x) = (1-\cos 2x )/2$ holds for all $x\in \R$, the Maclaurin series of $\sin^2(x)$ is $x^2 - x^4/3+  \text{high order terms} $.
Hence $x^2 - x^4/3 \leq \sin^2(x)$ for all $x\in \R$.
Combining this with the elementary bound $\sin^2(x) \leq x^2$ we get
\[
\frac{1}{m^2}\Bigg|\frac{\sin\left(\frac{\pi \ell}{m} r\right)}{\sin\left(\frac{\pi}{m}r\right)}\Bigg|^2
\geq \frac{1}{m^2} \frac{\left(\frac{\pi \ell}{m} r\right)^2 - \frac{\left(\frac{\pi \ell}{m} r\right)^4}{3}}{\left(\frac{\pi}{m}r\right)^2}
=\frac{\ell^2}{m^2}\left(1 - \frac{\pi^2 \ell^2}{3m^2}r^2\right) .
\]
So $|\widehat{g}(\beta)|\geq |\widehat{f}(\alpha)|\frac{\ell}{m}\sqrt{1 - \frac{\pi^2 \ell^2}{3m^2}r^2}$ as required. This proves Equation (\ref{eq:lowerbound}) and completes the proof.
\end{proof}

\begin{remark}
One natural question is whether we can get any nontrivial lower bound on $|\widehat{g}(\beta)|$ where $\beta$ is not the closest integer to $\frac{m}{n}\alpha$.	
First notice that in the proof we rely on $r = |\frac{m}{n}\alpha - \beta|_m$ being at most $1/2$ in order to use the bound $|\sin x|\geq \frac{2}{\pi}|x|$. Since this holds for at most two $\beta$, we can not use the same idea to lower bound other coefficients of $g$.

We first notice that if $m$ and $n$ are coprime and $\beta \neq \frac{m}{n}\alpha$ then $\|r\|\geq\frac{1}{n}$. Using $2\| x\| \leq \sin(\pi x) \leq \pi \| x\|$ for all $x\in\R$ we get
\[
\frac{1}{m} \left|\frac{\sin\left(\frac{\pi}{m} r\ell\right)} {\sin\left(\frac{\pi}{m} r\right)}\right|
\geq \frac{2}{\pi }\frac{\left \| \ell r /m \right \|}{m\|r/m\|}
= \frac{2}{\pi}\frac{\left \|\ell r /m \right \|}{|r|_m}
\geq
\begin{cases}
2/(\pi m |r|_m), 	&\text{if } n<m, \\
2/(\pi n |r|_m), 	&\text{if } m<n.
\end{cases}
\]

It turns out we can not do much better than this bound, as
\[
\frac{1}{m} \left|\frac{\sin\left(\frac{\pi}{m} r\ell\right)} {\sin\left(\frac{\pi}{m} r\right)}\right|
\leq \frac{\pi}{2}\frac{\left \| \ell r /m \right \|}{m\|r/m\|}
= \frac{\pi}{2}\frac{\left \|\ell r /m \right \|}{|r|_m} \, .
\]
Recall $r = |\frac{m}{n}\alpha - \beta|_m$, and if we take $m=n+1$, then we get $r = |\alpha - \beta + \frac{1}{n}\alpha|_m$.
If we take $\beta = \lfloor\frac{m}{n}\alpha\rceil-1 = \alpha-1$ and $\alpha=2$, then $|\widehat{f}(\alpha)|/(\pi m)\leq|\widehat{g}(\beta)| \leq |\widehat{f}(\alpha)|\pi/m$.
\end{remark}

\section{Proofs of the Main Theorems}

In this section we prove a sequence of propositions from which Theorem~\ref{thm:concentrated} and Theorem~\ref{thm:e-concentrated} will follow.
To illustrate the ideas we first consider a single character $\chi_\alpha$, and show that $\widetilde{\chi}_\alpha$ has small number of large coefficients.
Secondly, we show the same result for a $k$-sparse function.
Lastly, we consider any function with $k$ large coefficients and give the same result.

\begin{proposition}\label{prop:1-sparse}
Let $m, n\in \N$.
Let $\epsilon > 0$.
Let $f:\Z_n\to \C$ be a character, i.e. $f(x) = \chi_\alpha(x)$ for some $\alpha \in \Z_n$.
Let $\ell := \min(n,m)$.
Let $g:\Z_m\to \C$ be defined by $g(x) = f(x) $ for $0 \leq x < \ell$ and $g(x) = 0$ otherwise.
Let $\Gamma '$ be the interval in $\Z_m$ defined by $\Gamma' = \{\beta \in \Z_m \mid |\frac{m}{n}\alpha - \beta |_m \leq  r+1\}$ where $r = 1/(2\epsilon)$.
Then
\[
	\|g-g|_{\Gamma'}\|_2^2 	\leq 	\epsilon \, .
\]
\end{proposition}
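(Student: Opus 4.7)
The plan is to combine Parseval's identity on $\Z_m$ with the pointwise upper bound from Lemma \ref{lemma:bound_character}. Setting $s := \frac{m}{n}\alpha$ and noting that $|\widehat{f}(\alpha)| = 1$ (since $f = \chi_\alpha$), Lemma \ref{lemma:bound_character} yields $|\widehat{g}(\beta)| \leq \frac{1}{2|s - \beta|_m}$ whenever $\beta \ne s$. Parseval then gives
\[
	\|g - g|_{\Gamma'}\|_2^2 \;=\; \sum_{\beta \in \Z_m \setminus \Gamma'} |\widehat{g}(\beta)|^2 \;\leq\; \sum_{\substack{\beta \in \Z_m \\ |s-\beta|_m > r+1}} \frac{1}{4\,|s-\beta|_m^2},
\]
so the task reduces to bounding this weighted tail.

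To do so I would use a simple layering argument. As $\beta$ ranges over $\Z_m$, the values $s - \beta$ reduced into $(-m/2,\, m/2]$ are $m$ real numbers of the form $\delta + k$, where $\delta \in (-1/2, 1/2]$ depends only on $s$ and $k$ ranges over a set of $m$ consecutive integers. Consequently, for every integer $j \geq 0$ there are at most two indices $\beta$ with $|s-\beta|_m \in (r+j,\, r+j+1]$ (one from the positive branch $\delta + k$ lying in this interval, one from the negative branch lying in $[-(r+j+1),\, -(r+j))$). Stratifying the tail by these unit-length layers and using $|s-\beta|_m > r+j$ on the $j$-th layer,
\[
	\sum_{\substack{\beta \in \Z_m \\ |s-\beta|_m > r+1}} \frac{1}{4\,|s-\beta|_m^2} \;\leq\; \frac{1}{2}\sum_{j=1}^{\infty} \frac{1}{(r+j)^2} \;\leq\; \frac{1}{2}\int_{0}^{\infty} \frac{dx}{(r+x)^2} \;=\; \frac{1}{2r}.
\]
Since $r = 1/(2\epsilon)$, this final bound is exactly $\epsilon$, as required.

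The main obstacle, though genuinely minor, is verifying the ``at most two preimages per unit-length layer'' count that converts the sum over $\Z_m$ into the one-dimensional tail $\sum 1/(r+j)^2$; one has to be careful not to double-count, not to overlook the wrap-around (negative-branch) preimage, and to begin the summation at $j=1$ rather than $j=0$ so that the condition $|s-\beta|_m > r+1$ is respected. Everything else is a direct application of Lemma \ref{lemma:bound_character} together with a standard integral comparison, and the numerical constants are calibrated so that $r = 1/(2\epsilon)$ produces $\epsilon$ on the nose.
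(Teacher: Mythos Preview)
Your proof is correct and follows essentially the same route as the paper's: both invoke Lemma~\ref{lemma:bound_character} to obtain $|\widehat{g}(\beta)|\le \tfrac{1}{2}|s-\beta|_m^{-1}$, observe that the complement of $\Gamma'$ contributes at most two $\beta$'s per unit of distance from $s=\tfrac{m}{n}\alpha$ (your layering argument is exactly the paper's left/right split $L\cup R$), and then bound the resulting tail $\tfrac{1}{2}\sum_{j\ge 1}(r+j)^{-2}$ by $\tfrac{1}{2r}=\epsilon$ via the integral comparison, which the paper phrases as $\sum_{x\ge r+1}x^{-2}\le 1/r$.
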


\begin{proof}
We begin by using the bound from Lemma \ref{lemma:bound_character}
\[
	\|g-g|_{\Gamma'}\|_2^2
	= \sum_{\beta \not\in \Gamma '}|\widehat{g}(\beta)|^2
	\leq \sum_{\beta \not\in \Gamma '}\frac{1}{4|\frac{m}{n}\alpha - \beta|_m^2}\, .
\]
The set $\Gamma '$ consists of all elements of $\Z_m$ whose distance is at most $r+1$ from $\frac{m}{n}\alpha $.
Thus the set $\Z_m\setminus \Gamma'$ can be written as $L\cup R$ where $L$ consists of all elements ``to the left" of $\frac{m}{n}\alpha$, i.e.
$L = \left\{\lceil\frac{m}{n}\alpha  \rceil-x \mid  x = r+1, r+2, \dots , \lceil m/2 \rceil  \right\}\subseteq \Z_m$ and $R$ is all elements ``to the right" of $\frac{m}{n}\alpha$, i.e. $R = \left\{\lfloor \frac{m}{n}\alpha  \rfloor +x \mid  x = r+1, r+2, \dots , \lceil m/2 \rceil  \right\}\subseteq \Z_m$. Then our sum becomes
\[
	\|g-g|_{\Gamma'}\|_2^2
	\leq \sum_{\beta \in L\cup R}\frac{1}{4|\frac{m}{n}\alpha - \beta|_m^2}
	\leq 2\sum_{x= r+1}^{\lceil m/2 \rceil}\frac{1}{4x^2}
	\leq \frac{1}{2r} \\
	= \epsilon \, .
\]
where in the last inequality we use the fact that $\sum_{x = r+1}^{\infty}\frac{1}{x^2} \leq \frac{1}{r}$.
\end{proof}

\begin{proposition}\label{prop:Gamma-sparse}
Let $m, n\in \N$.
Let $\epsilon > 0$.
Let $f:\Z_n\to \C$ be a $|\Gamma |$-sparse function, i.e. $f(x) = \sum_{\alpha \in \Gamma}\widehat{f}(\alpha)\chi_\alpha(x)$.
Let $\ell := \min(n,m)$.
Let $g:\Z_m\to \C$ be defined by $g(x) = f(x) $ for $0 \leq x < \ell$ and $g(x) = 0$ otherwise.
Let $\Gamma ' =  \bigcup_{\alpha\in \Gamma}\{\beta \in \Z_m \mid |\frac{m}{n}\alpha - \beta |_m \leq r+1\}$ for $r = |\Gamma|\|f\|_2^2 / (2\epsilon)$.
Then
\[
\|g - g|_{\Gamma '} \|_2^2 	\leq 	\epsilon \, .
\]
\end{proposition}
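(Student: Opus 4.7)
The plan is to reduce the $|\Gamma|$-sparse case to the $1$-sparse case of Proposition~\ref{prop:1-sparse} using linearity of both the extension operator $\widetilde{\cdot}$ (Lemma~\ref{lemma:tilde_properties}) and the projection operator $|_{\Gamma'}$. Write $f = \sum_{\alpha\in\Gamma} f_\alpha$ where $f_\alpha(x) := \widehat{f}(\alpha)\chi_\alpha(x)$, and set $g_\alpha := \widetilde{f_\alpha}$. Linearity then yields $g - g|_{\Gamma'} = \sum_{\alpha\in\Gamma}(g_\alpha - g_\alpha|_{\Gamma'})$, so it suffices to bound each summand and then combine.

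For the per-summand bound, let $\Gamma'_\alpha := \{\beta\in\Z_m : |\tfrac{m}{n}\alpha - \beta|_m \le r+1\}$; by definition $\Gamma' = \bigcup_{\alpha\in\Gamma}\Gamma'_\alpha$, so $\Gamma'_\alpha \subseteq \Gamma'$. Removing further Fourier coefficients only shrinks the $L^2$-norm, hence $\|g_\alpha - g_\alpha|_{\Gamma'}\|_2 \le \|g_\alpha - g_\alpha|_{\Gamma'_\alpha}\|_2$. Repeating the argument of Proposition~\ref{prop:1-sparse} for the scaled character $f_\alpha$---using the upper bound in Lemma~\ref{lemma:bound_character} together with the tail estimate $\sum_{x\ge r+1} x^{-2} \le 1/r$---gives $\|g_\alpha - g_\alpha|_{\Gamma'_\alpha}\|_2^2 \le |\widehat{f}(\alpha)|^2/(2r)$.

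The main obstacle is that we cannot simply sum these squared bounds: the residual functions $g_\alpha - g_\alpha|_{\Gamma'}$ are generally \emph{not} pairwise orthogonal in $L^2(\Z_m)$, since the operator $\widetilde{\cdot}$ smears each character $\chi_{\alpha,n}$ across many characters $\chi_{\beta,m}$. To circumvent this, I would apply the triangle inequality for the $L^2$ norm, followed by Cauchy--Schwarz together with Parseval's identity $\sum_{\alpha\in\Gamma}|\widehat{f}(\alpha)|^2 = \|f\|_2^2$:
\[
\|g - g|_{\Gamma'}\|_2 \;\le\; \sum_{\alpha\in\Gamma}\|g_\alpha - g_\alpha|_{\Gamma'}\|_2 \;\le\; \sum_{\alpha\in\Gamma}\frac{|\widehat{f}(\alpha)|}{\sqrt{2r}} \;\le\; \sqrt{\frac{|\Gamma|}{2r}}\,\|f\|_2 \, .
\]
Squaring and substituting $r = |\Gamma|\|f\|_2^2/(2\epsilon)$ yields $\|g - g|_{\Gamma'}\|_2^2 \le \epsilon$, as required. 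The factor $|\Gamma|$ introduced by Cauchy--Schwarz is exactly what forces $r$ in the proposition to scale linearly in $|\Gamma|\|f\|_2^2$, rather than merely in $1/\epsilon$ as in the $1$-sparse case.
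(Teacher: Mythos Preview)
Your proof is correct and arrives at exactly the same bound as the paper, using the same three ingredients (Proposition~\ref{prop:1-sparse}, Cauchy--Schwarz, and Parseval), but the organisation differs. The paper works on the Fourier side throughout: it bounds each coefficient $|\widehat{g}(\beta)|$ by $\sum_{\alpha\in\Gamma}|\widehat f(\alpha)|\,|\langle\widetilde\chi_{\alpha,n},\chi_{\beta,m}\rangle|$, applies Cauchy--Schwarz \emph{pointwise in $\beta$} to get $|\widehat g(\beta)|^2 \le \|f\|_2^2\sum_{\alpha}|\langle\widetilde\chi_{\alpha,n},\chi_{\beta,m}\rangle|^2$, and only then sums over $\beta\notin\Gamma'$ and invokes Proposition~\ref{prop:1-sparse}. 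You instead stay at the function level: triangle inequality for the $L^2$ norm on the decomposition $g-g|_{\Gamma'}=\sum_\alpha(g_\alpha-g_\alpha|_{\Gamma'})$, Proposition~\ref{prop:1-sparse} on each summand, and Cauchy--Schwarz only at the end on the scalar sum $\sum_\alpha|\widehat f(\alpha)|$. Your route is arguably cleaner conceptually (no need to unpack into coefficients), while the paper's makes the role of the weights $\langle\widetilde\chi_{\alpha,n},\chi_{\beta,m}\rangle$ more explicit; neither gives a sharper constant.
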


\begin{proof}
We can bound $|\widehat{g}(\beta)|$ with the triangle inequality
\[
|\widehat{g}(\beta)|
= |\langle g, \chi_{\beta, m} \rangle |
= \Bigg| \langle \sum_{\alpha \in \Gamma} \widehat{f}(\alpha) \widetilde{\chi}_{\alpha, n}, \chi_{\beta, m}\rangle \Bigg|
 \leq \sum_{\alpha \in \Gamma} |\widehat{f}(\alpha)||\langle \widetilde{\chi}_{\alpha, n} , \chi_{\beta, m} \rangle | \, .
\]
Then
\[
\| g - g|_{\Gamma '} \|_2^2 =\sum_{\beta \not\in \Gamma '}|\widehat{g}(\beta)|^2
\leq \sum_{\beta \not\in \Gamma '}\Bigg|\sum_{\alpha \in \Gamma} |\widehat{f}(\alpha)||\langle \widetilde{\chi}_{\alpha, n} , \chi_{\beta, m} \rangle | \Bigg|^2 \, .
\]
The Cauchy--Schwarz inequality gives
\begin{equation*}
\begin{split}
\|g-g|_{\Gamma '}\|_2^2 &\leq \sum_{\beta \not\in \Gamma '}\Bigg(\sum_{\alpha \in \Gamma} |\widehat{f}(\alpha)|^2 \sum_{\alpha \in \Gamma}|\langle \widetilde{\chi}_{\alpha, n} , \chi_{\beta, m} \rangle |^2 	\Bigg)  \\
&= \sum_{\beta \not\in \Gamma '} \Bigg( \|f\|_2^2 \sum_{\alpha \in \Gamma }|\langle \widetilde{\chi}_{\alpha, n} , \chi_{\beta, m} \rangle |^2\Bigg) \\
&= \|f\|_2^2\sum_{\alpha \in \Gamma }\sum_{\beta \not\in \Gamma '}|\langle \widetilde{\chi}_{\alpha, n} , \chi_{\beta, m} \rangle |^2 \, .
\end{split}
\end{equation*}

For the last part of the proof we bound $\sum_{\beta \not\in \Gamma '}|\langle \widetilde{\chi}_{\alpha, n} , \chi_{\beta, m} \rangle |^2$ using Proposition~\ref{prop:1-sparse} with $\epsilon $ replaced by $\frac{\epsilon}{|\Gamma |\|f\|_2^2}$ \big(as $\langle \widetilde{\chi}_{\alpha, n} , \chi_{\beta, m} \rangle = \widehat{\widetilde{\chi}}_{\alpha, n}(\beta)\big)$.
This gives
\[
	\|g-g|_{\Gamma '}\|_2^2
	\leq  \|f\|_2^2\sum_{\alpha \in \Gamma }\frac{\epsilon}{|\Gamma |\|f\|_2^2}
	= \epsilon \, .
\]
\end{proof}

\begin{proposition}\label{prop:Gamma-concentrated}
Let $m, n\in \N$.
Let $f:\Z_n \to \C$.
Suppose $\Gamma \subseteq \Z_n$ is such that
\[
	\|f-f|_\Gamma \|_2^2 		\leq  	\epsilon
\]
for some $\epsilon > 0$.
Let $\ell := \min(n,m)$.
Let $g:\Z_m\to \C$ be defined by $g(x) = f(x) $ for $0 \leq x < \ell$ and $g(x) = 0$ otherwise.
Let $\epsilon'>0$.
Let $\Gamma ' =  \bigcup_{\alpha\in \Gamma}\{\beta \in \Z_m \mid |\frac{m}{n}\alpha - \beta |_m \leq r+1\}$ for $r =|\Gamma|\|f\|_2^2 / (2\epsilon')$.
Then 
\[
\|g-g|_{\Gamma '}\|_2^2
\leq  t \epsilon + \epsilon' + 2\sqrt{t\epsilon\epsilon'} \, ,
\]
where $t = n/m$.
\end{proposition}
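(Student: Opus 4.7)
The plan is to decompose $f$ into its sparse part plus an error term, push the decomposition through $\widetilde{\cdot}$ using its linearity (Lemma \ref{lemma:tilde_properties}), and then bound each piece separately.

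First I would write $f = h + e$ where $h := f|_\Gamma$ is $|\Gamma|$-sparse (supported on $\Gamma$) and $e := f - f|_\Gamma$ satisfies $\|e\|_2^2 \leq \epsilon$ by hypothesis. Because the mapping $\widetilde{\cdot}:L^2(\Z_n)\to L^2(\Z_m)$ is linear (Lemma \ref{lemma:tilde_properties}), we get $g = \widetilde{f} = \widetilde{h} + \widetilde{e}$. Consequently
\[
g - g|_{\Gamma'} = \bigl(\widetilde{h}-\widetilde{h}|_{\Gamma'}\bigr) + \bigl(\widetilde{e}-\widetilde{e}|_{\Gamma'}\bigr),
\]
and the triangle inequality in $L^2(\Z_m)$ gives $\|g - g|_{\Gamma'}\|_2 \leq \|\widetilde{h}-\widetilde{h}|_{\Gamma'}\|_2 + \|\widetilde{e}-\widetilde{e}|_{\Gamma'}\|_2$.

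Next I would bound each term. For the first, I would apply Proposition \ref{prop:Gamma-sparse} to the $|\Gamma|$-sparse function $h$, with error parameter $\epsilon'$. That proposition requires taking $r \geq |\Gamma|\|h\|_2^2/(2\epsilon')$; since $h$ is the projection of $f$ onto $\operatorname{span}\{\chi_\alpha : \alpha \in \Gamma\}$, Parseval gives $\|h\|_2^2 \leq \|f\|_2^2$, so the choice $r = |\Gamma|\|f\|_2^2/(2\epsilon')$ in the statement is admissible and the associated $\Gamma'$ contains the set produced by Proposition \ref{prop:Gamma-sparse} for $h$. Therefore $\|\widetilde{h}-\widetilde{h}|_{\Gamma'}\|_2^2 \leq \epsilon'$. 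For the second term, since $\widetilde{e}|_{\Gamma'}$ is an orthogonal projection of $\widetilde{e}$, dropping it can only increase the norm by a bounded amount: $\|\widetilde{e}-\widetilde{e}|_{\Gamma'}\|_2 \leq \|\widetilde{e}\|_2$, and Lemma \ref{lemma:tilde_properties} gives $\|\widetilde{e}\|_2^2 \leq (n/m)\|e\|_2^2 \leq t\epsilon$.

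Finally, I would combine: $\|g - g|_{\Gamma'}\|_2 \leq \sqrt{\epsilon'} + \sqrt{t\epsilon}$, and squaring yields $\|g - g|_{\Gamma'}\|_2^2 \leq \epsilon' + t\epsilon + 2\sqrt{t\epsilon\epsilon'}$, which is the claimed bound. There is no real obstacle here; the only subtle point is verifying that the $\Gamma'$ specified in the statement (which is defined using $\|f\|_2^2$, not $\|h\|_2^2$) is at least as large as the set Proposition \ref{prop:Gamma-sparse} requires for $h$, so that the proposition's conclusion indeed applies with the same $\Gamma'$.
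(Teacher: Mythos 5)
Your proposal is correct and follows essentially the same route as the paper: the same decomposition $f = f|_\Gamma + (f - f|_\Gamma)$, pushed through $\widetilde{\cdot}$ by linearity, with Proposition \ref{prop:Gamma-sparse} bounding the sparse part by $\epsilon'$ and Lemma \ref{lemma:tilde_properties} bounding the tail by $t\epsilon$. Your use of the $L^2$ triangle inequality followed by squaring is just a repackaging of the paper's expansion of the square plus Cauchy--Schwarz on the cross term, and your explicit check that $\|f|_\Gamma\|_2^2 \leq \|f\|_2^2$ makes the stated $\Gamma'$ admissible is a point the paper leaves implicit.
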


\begin{proof}
We write $f = f|_{\Gamma} + f|_{\Z_n\setminus \Gamma}$.
From linearity of the $ \widetilde{\cdot} $ operator
\[
g = \widetilde{f} = \widetilde{f}|_{\Gamma} + \widetilde{f}|_{\Z_n\setminus \Gamma} \, .
\]	
There is a possibility for some confusion with the notation. By $\widetilde{f}|_{\Gamma}$ we mean $\widetilde{(f|_\Gamma)}$, not $\widetilde{f}$ projected on $\Gamma$ (the latter is not defined as $\Gamma \subseteq \Z_n$).
By linearity of the Fourier Transform
\[
	\widehat{g}(\beta) = \widehat{\widetilde{f_{\;}}}|_\Gamma(\beta) + \widehat{\widetilde{f_{\;}}}|_{\Z_n\setminus \Gamma}(\beta)
\]
for all $\beta \in \Z_m$. Hence
\begin{equation*}
\begin{split}
\| g - g|_{\Gamma '}\|_2^2
&= \sum_{\beta \not\in \Gamma '}|\widehat{g}(\beta )|^2
= \sum_{\beta \not\in \Gamma '}\left|\widehat{\widetilde{f_{\;}}}|_\Gamma(\beta ) + \widehat{\widetilde{f_{\;}}}|_{\Z_n \setminus \Gamma}(\beta )\right|^2 \\
&\leq \sum_{\beta \not\in \Gamma '}\big|\widehat{\widetilde{f_{\;}}}|_\Gamma(\beta ) \big|^2 + 2\sum_{\beta \not\in \Gamma '}\big|\widehat{\widetilde{f_{\;}}}|_\Gamma(\beta ) \big| \big|\widehat{\widetilde{f_{\;}}}|_{\Z_n \setminus \Gamma }(\beta )\big| + \sum_{\beta \not\in \Gamma '}\big|\widehat{\widetilde{f_{\;}}}|_{\Z_n \setminus \Gamma}(\beta )\big|^2  .
\end{split}
\end{equation*}

The rest of the proof consists of bounding the three terms on the right-hand side of the above equation.
Since $f|_\Gamma :\Z_n\to \C $ is $|\Gamma |$-sparse we can apply Proposition \ref{prop:Gamma-sparse} with $\epsilon'$, which says
\[
	\sum_{\beta \not\in \Gamma '}\big|\widehat{\widetilde{f_{\;}}}|_\Gamma(\beta) \big|^2 = \|\widetilde{f}|_\Gamma-\big(\widetilde{f}|_\Gamma\big)|_{\Gamma'} \|_2^2 \leq \epsilon'  .
\]
By hypothesis $\|f|_{\Z_n\setminus \Gamma }\|_2^2 = \|f-f|_\Gamma \|_2^2 \leq \epsilon$. Hence
\[
	\sum_{\beta \not\in \Gamma '}\big|\widehat{\widetilde{f_{\;}}}|_{\Z_n \setminus \Gamma}(\beta ) \big|^2
	\leq \sum_{\beta \in \Z_n}\big|\widehat{\widetilde{f_{\;}}}|_{\Z_n \setminus \Gamma}(\beta )\big|^2
	= \|\widetilde{f}|_{\Z_n \setminus \Gamma}\|_2^2
	\leq \frac{n}{m}\|f|_{\Z_n\setminus \Gamma }\|_2^2
	\leq t \epsilon \, .
\]
	
Finally by the Cauchy--Schwarz inequality
\[2\sum_{\beta \not\in \Gamma '}\big|\widehat{\widetilde{f_{\;}}}|_\Gamma(\beta ) \big| \big|\widehat{\widetilde{f_{\;}}}|_{\Z_n \setminus \Gamma }(\beta )\big|
\leq 2 \sqrt{\sum_{\beta \not\in \Gamma '}\big|\widehat{\widetilde{f_{\;}}}|_\Gamma(\beta ) \big|^2 \sum_{\beta \not \in \Gamma'} \big|\widehat{\widetilde{f_{\;}}}|_{\Z_n \setminus \Gamma}(\beta ) \big|^2 }
\leq 2\sqrt{\epsilon' t \epsilon} \, .\]

Putting it all together we get
\begin{equation*}
\begin{split}
\sum_{\beta \not\in \Gamma '}|\widehat{\widetilde{g}}(\beta )|^2
&\leq \sum_{\beta \not\in \Gamma '}\big|\widehat{\widetilde{f_{\;}}}|_\Gamma(\beta ) \big|^2 +  2\sum_{\beta \not\in \Gamma '}\big|\widehat{\widetilde{f_{\;}}}|_\Gamma(\beta ) \big| \big|\widehat{\widetilde{f_{\;}}}|_{\Z_n \setminus \Gamma }(\beta )\big| + \sum_{\beta \not\in \Gamma '}\big|\widehat{\widetilde{f_{\;}}}|_{\Z_n \setminus \Gamma}(\beta )\big|^2 \\
&\leq \epsilon' + 2\sqrt{\epsilon' t \epsilon} + t \epsilon \, .
\end{split}
\end{equation*}
\end{proof}

\begin{proof}[\textbf{Proof of Theorem \ref{thm:concentrated}}]
Let $\epsilon >0$.
By hypothesis the family $\{f_k:\Z_{n_k}\to\C\}_{k\in\N}$ is concentrated. Hence there exists a bivariate polynomial $P\in \R[x, y]$ and sets $\Gamma_k\subseteq \Z_{n_k}$ such that $|\Gamma_k|\leq P(\log(n_k),\frac{1}{\epsilon / 6})$ and $\|f_k - f_k|_{\Gamma_k}\|_2^2 \leq \epsilon/6$ for all $k\in \N$.

By Proposition \ref{prop:Gamma-concentrated} with $\epsilon ' = \epsilon /6$ there exist sets $\Gamma_k'\subseteq \Z_{m_k}$ such that
$
	\|\widetilde{f}_k - \widetilde{f}_k|_{\Gamma_k'}\|_2^2
	\leq t_k\epsilon /6 + \epsilon /6 + 2\sqrt{t_k}(\epsilon/6) ,
$
for all $k\in \N$, where $t_k := n_k/m_k$.
By hypothesis $n_k/m_k\leq 2$ so
\[
	\|\widetilde{f}_k - \widetilde{f}_k|_{\Gamma_k'}\|_2^2
	\leq 2 \epsilon /6 + \epsilon /6 + 2\sqrt{2}(\epsilon/6)< \epsilon
\]
for all $k\in \N$.
All that remains to show is that the sets $\Gamma_k'$ have polynomial size.
Since Proposition \ref{prop:Gamma-concentrated} gives an explicit construction we know that each $\Gamma_k'$ is a union of $|\Gamma_k|$ intervals of width at most $2(r+1)+1=2r+3$ where $r= |\Gamma_k|\|f_k\|_2^2/(2\epsilon')$ hence
\[
|\Gamma_k'|
\leq |\Gamma_k|(2r+3)
= |\Gamma_k|\left(2\frac{|\Gamma_k |  \|f_k\|_2^2}{2\epsilon'}+3\right)
= |\Gamma_k|\left(\frac{6|\Gamma_k |  \|f_k\|_2^2}{\epsilon}+3\right)
\]

By hypothesis there exists a polynomial $Q\in \R[x]$ such that $\|f_k\|_2^2\leq Q(\log(n_k))$ for all $k\in \N$. When we upper bound $|\Gamma_k|$ with $P(\log(n_k),6/\epsilon)$ and upper bound $\|f_k\|_2^2$ with $Q(\log(n_k))$ the displayed inequality becomes
\[
	|\Gamma_k'|
	\leq P(\log(n_k),6/\epsilon)\left(6\epsilon^{-1}P(\log(n_k),6/\epsilon) Q(\log(n_k)) + 3\right).
\]
By hypothesis $n_k/2\leq m_k$, and so $\log(n_k)\leq\log(m_k)+1$. Therefore, the expression is polynomial in $1/\epsilon$ and $\log(m_k)$ as required. So the definition of concentration is satisfied with the polynomial $P'(x, y) =  P(x+1, 6y)(6yP(x+1, 6y)Q(x+1) + 3)$.
\end{proof}

\begin{proof}[\textbf{Proof of Theorem \ref{thm:e-concentrated}}]
The proof of this theorem is similar to that of Theorem \ref{thm:concentrated}.
By hypothesis the family $\{f_k:\Z_{n_k}\to\C\}_{k\in\N}$ is $\epsilon$-concentrated. Hence there exists a polynomial $P\in \R[x]$ and there exist sets $\Gamma_k\subseteq \Z_{n_k}$ such that for all $k\in \N$ we have $|\Gamma_k|\leq P(\log(n_k))$ and $\|f_k - f_k|_{\Gamma_k}\|_2^2 \leq \epsilon$.

From Proposition \ref{prop:Gamma-concentrated} there exist sets $\Gamma_k'\subseteq \Z_{m_k}$ such that $\|\widetilde{f}_k - \widetilde{f}_k|_{\Gamma_k'}\|_2^2 \leq t\epsilon + \epsilon' + 2\sqrt{t\epsilon\epsilon'}$ for all $k\in \N$ (recall that $t:= \sup_{k\in \N}\{n_k/m_k\}$).
If we choose $\epsilon' = \min(\eta / 2, \eta^2/(16t\epsilon))$ then we get
\[
\|\widetilde{f}_k - \widetilde{f}_k|_{\Gamma_k'}\|_2^2
\leq t\epsilon + \eta/2 + 2\sqrt{t\epsilon\frac{\eta^2}{16t\epsilon}}
= t\epsilon + \eta/2 + \eta/2
= t\epsilon + \eta \, .
\]
Moreover, Proposition \ref{prop:Gamma-concentrated} gives us a bound for $|\Gamma_k'|$. Recall that $\Gamma_k'$ is a union of $|\Gamma_k|$ intervals of at most width $2r+3$ where $r= |\Gamma_k|\|f_k\|_2^2/(2\epsilon')$ hence
\[
|\Gamma_k'|
\leq |\Gamma_k|(2r+3)
= |\Gamma_k|\left(2\frac{|\Gamma_k | \|f_k\|_2^2}{2\epsilon'}+3\right) .
\]
By hypothesis there exists a polynomial $Q\in \R[x]$ such that $\|f_k\|_2^2\leq Q(\log(n_k))$ for all $k\in \N$. When we upper bound $|\Gamma_k|$ with $P(\log(n_k))$ and upper bound $\|f_k\|_2^2$ with $Q(\log(n_k))$ the displayed inequality becomes
\[
|\Gamma_k'|
\leq P(\log(n_k)) \left(P(\log(n_k))Q(\log(n_k))/\epsilon' + 3\right) .
\]
By hypothesis $n_k/2\leq m_k$, and so $\log(n_k)\leq\log(m_k)+1$. Since $\epsilon'$ does not depend on $k$ this expression is polynomial in $\log(m_k)$ as required.\footnote{However, since $\epsilon'$ depends on $\epsilon,\eta$ one can redefine a global polynomial $P'(x,y)$ in those terms to show the affect of $\eta$ on the polynomial. This is not needed for our proof, as the only important variable is the domain size.} So the family $\{\widetilde{f}_k:\Z_{m_k}\to \C\}_{k\in \N}$ is $(t\epsilon + \eta)$-concentrated with respect to the polynomial $P'(x) = P(x+1)(P(x+1)Q(x+1)/\epsilon' + 3)$.
\end{proof}

\section{Coefficient Cancellation} \label{sec:coeff-cancel}

In this section we give some examples of functions where $|\widehat{f}(\alpha)|$ is large but $|\widehat{\widetilde{f_{\;}}}(\lfloor \frac{m}{n}\alpha \rceil)|$ is not.
We then show that $|\widehat{f}(\alpha)|$ being large implies $|\widehat{\widetilde{f_{\;}}}(\lfloor \frac{m}{n}\alpha \rceil)|$ is large if $f$ satisfies certain conditions.

Given some function $f:\Z_n\to \C$, which can be well approximated by $f|_\Gamma$, Proposition \ref{prop:Gamma-concentrated} shows us how to construct a set $\Gamma '\subseteq \Z_m$ such that $\widetilde{f}:\Z_m\to \C$ can be well approximated by $\widetilde{f}|_{\Gamma'}$.

However, the theorem does not guarantee that $|\widehat{\widetilde{f_{\;}}}(\beta)|$ is large for every $\beta \in \Gamma'$. So, if we choose $\Gamma'$ as in the theorem, this will be sufficient to get a good approximation but some of the members of $\Gamma'$ may not be necessary for that approximation.

In the case where the function $f$ is a single character we proved a stronger result: if $f = \chi_\alpha$ then $\widetilde{f}=\widetilde{\chi}_\alpha$ and $\widehat{\widetilde{f_{\;}}}(\lfloor \frac{m}{n}\alpha \rceil )$ is greater than $2\ell/(\pi m)$ where $\ell = \min(n,m)$. Thus if we want to closely approximate $\widetilde{f}$ it is not only sufficient but necessary for $\Gamma '$ to contain $\lfloor \frac{m}{n}\alpha \rceil$.
Since $\widetilde{f} = \sum_{\alpha \in \Z_n}\widehat{f}(\alpha)\widetilde{\chi}_{\alpha}$ is just a sum of $\widetilde{\chi}_\alpha$ one might expect this result can be generalised to arbitrary functions. However there is no easy way to conclude which coefficients of $\widetilde{f}$ are large, or even how many of them there are.
Observe that
\[
\widehat{\widetilde{f_{\;}}}(\beta)
= \sum_{\alpha \in \Z_n} \widehat{f}(\alpha) \langle \widetilde{\chi}_{\alpha, n} , \chi_{\beta, m} \rangle
= \widehat{f}(\alpha') \langle \widetilde{\chi}_{\alpha', n} , \chi_{\beta, m} \rangle + \sum_{\alpha' \neq \alpha \in \Z_n} \widehat{f}(\alpha) \langle \widetilde{\chi}_{\alpha, n} , \chi_{\beta, m} \rangle\, .
\]
So even if $\widehat{f}(\alpha') \langle \widetilde{\chi}_{\alpha', n} , \chi_{\beta, m} \rangle$ is large, e.g. when $\alpha' = \lfloor \frac{m}{n}\alpha \rceil $, it may be cancelled out by the other term.

\begin{example}
Let $n$ be an odd number and consider the function $f:\Z_n\to \C$ given by
\[
 f(x) :=
	 \begin{cases}
		 1 	& \text{ when $x$ is even} , \\
		-1 	& \text{ when $x$ is odd}.
	\end{cases}
\]
This function has several large coefficients centred at $n/2$ (follows from Lemma \ref{lemma:weyl}; see also Eq. (\ref{eq:bits}) in Lemma \ref{lemma:bits} below).
Let $m=n + 1$ and let $d = m/2\in \N$.
We show that $d$-th Fourier coefficient of $\widetilde{f}:\Z_m\to \C$ is large and that the magnitude of the rest is $1/m$.
Notice that for every $0\leq x<n$ we have $\widetilde{f}(x) = \chi_{d,m}(x)$.
Therefore
\begin{equation*}
\begin{split}
\widehat{\widetilde{f_{\;}}}(\beta)
&= \frac{1}{m}\sum_{x\in\Z_m}\widetilde{f}(x)\overline{\chi_\beta(x)}
= \frac{1}{m}\sum_{0\leq x<n}\widetilde{f}(x)\overline{\chi_\beta(x)}
= \frac{1}{m}\sum_{0\leq x<n}\chi_{d-\beta,m}(x) \\
&= \frac{1}{m}\sum_{0\leq x<n}\chi_{d-\beta,m}(x) + \frac{1}{m}\sum_{n\leq x<m}\chi_{d-\beta,m}(x) - \frac{1}{m}\sum_{n\leq x<m}\chi_{d-\beta,m}(x) \\
&= \frac{1}{m}\sum_{0\leq x<m}\chi_{d-\beta,m}(x) - \frac{1}{m}\sum_{n\leq x<m}\chi_{d-\beta,m}(x) \, .
\end{split}
\end{equation*}
Therefore, for $\beta\neq d$, we get $|\widehat{\widetilde{f_{\;}}}(\beta)| = |\frac{1}{m}\sum_{0\leq x<m}\chi_{d-\beta,m}(x) - \frac{1}{m}\chi_{d-\beta,m}(n)| = 0 + \frac{1}{m}$.
Since $\|\widetilde{f}\|_2^2 = \frac{1}{m}(m-1)$ from Parseval's identity we get $|\widehat{\widetilde{f_{\;}}}(d)| > \frac{1}{m}(m-1) - (m-1)\frac{1}{m^2} = 1 - O(1/m)$. We therefore see that all large coefficients of $f$, except for one, are canceled out in $\widetilde{f}$. This example also works for $m=n+1+2k$ for some $k$ not too large, e.g. $k\approx\sqrt{n}$.
\end{example}

\begin{example}\label{ex:switch_down}
Let $n$ be even. Let $m = n/2$. Define
\[
	f(x) :=
	\begin{cases}
		0					& \text{ when } 0 \le x < m , \\
		\chi_{\alpha, n}(x) & \text{ when } m \le x < n.
	\end{cases}
\]
Then $\widehat{f}(\alpha)$ is large. However $\widetilde{f}$ is the zero function, hence all its Fourier coefficients are zero. In the appendix we refine this example to show that the bound in Theorem~\ref{thm:e-concentrated} is tight.
\end{example}

Recall from Lemma \ref{lemma:ghat} that $\widehat{\widetilde{f_{\;}}}(\beta)$ is a weighted sum of the Fourier coefficients of $f$,
\[
	\widehat{\widetilde{f_{\;}}}(\beta)
	= \sum_{\alpha \in \Z_n} \widehat{f}(\alpha) 	\frac{1}{m} 		\sum_{x \in \Z_\ell} \omega_{m}^{(\frac{m}{n}\alpha-\beta) x} \,.
\]
In addition, the weight of the $\alpha$-th Fourier coefficient, $\frac{1}{m} \sum_{x \in \Z_\ell} \omega_{m}^{(\frac{m}{n}\alpha-\beta) x}$, is bounded by $1/(2|\frac{m}{n}\alpha-\beta|_m)$ (See Equations (\ref{eq:ghatbeta}) and (\ref{eq:ghatbound}) in the proof of Lemma \ref{lemma:bound_character}).
Therefore if $f$ has a few large Fourier coefficients and they are all spaced far apart, then the only term in the sum that will contribute significantly to $\widehat{\widetilde{f_{\;}}}(\beta)$ is the $\alpha$-th term where $|\widehat{f}(\alpha)|$ is large  and $|\frac{m}{n}\alpha-\beta|_m$ is minimal.
Proposition \ref{prop:concentrated_apart} formalises this argument.


\begin{proposition}\label{prop:concentrated_apart}
	Let $m, n\in \N$ with $n \leq m \leq 2n$.
	Let $L> 0$.
	Let $C>1$.
	Let $\Gamma \subseteq \Z_n$.
	Let $r = 20C(\log (|\Gamma|/2)+1)$.
	Let $\tau <  \frac{L}{20}\frac{1}{3+2\log(n)}$.
	Let $f:\Z_n\to \C$,
    and suppose $\Gamma\subseteq \Z_n$ satisfies $|\alpha - \alpha'|_n > r$ for all $\alpha, \alpha'\in \Gamma$ with $\alpha\neq \alpha'$, and that
	\[
		L \leq |\widehat{f_{\;}}(\alpha)|\leq CL
	\]
	for all $\alpha\in \Gamma$ and
	\[
		|\widehat{f_{\;}}(\alpha)|\leq \tau
	\]
	for all $\alpha\not\in \Gamma$.
	Then
	\[
		|\widehat{\widetilde{f_{\;}}}(\lfloor \tfrac{m}{n}\alpha \rceil )|\geq \frac{1}{5}L \,
	\]
	for all $\alpha\in \Gamma$.
\end{proposition}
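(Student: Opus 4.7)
The plan is to expand $\widehat{\widetilde{f_{\;}}}(\beta)$ via Remark~\ref{rem:inner product} at $\beta=\lfloor\tfrac{m}{n}\alpha\rceil$, namely
\[
\widehat{\widetilde{f_{\;}}}(\beta) = \sum_{\alpha'\in\Z_n}\widehat{f_{\;}}(\alpha')\,\langle\widetilde{\chi}_{\alpha',n},\chi_{\beta,m}\rangle,
\]
and split the sum into three pieces: the \emph{main term} $\alpha'=\alpha$, the \emph{cross-talk} $\alpha'\in\Gamma\setminus\{\alpha\}$, and the \emph{noise} $\alpha'\notin\Gamma$. The triangle inequality then reduces the task to lower-bounding the main term and upper-bounding the other two, with targets main $\geq L/\pi$, cross-talk $\leq L/19$, and noise $< L/20$, so that $|\widehat{\widetilde{f_{\;}}}(\beta)| \geq L/\pi - L/19 - L/20 > L/5$.

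The main term is immediate from the lower bound~\eqref{eq:lowerbound} of Lemma~\ref{lemma:bound_character}. Since $n\leq m$ forces $\ell=n$, and $m\leq 2n$ gives $\tfrac{2\ell}{\pi m}\geq \tfrac{1}{\pi}$, we have $|\widehat{f_{\;}}(\alpha)|\,|\langle\widetilde{\chi}_{\alpha,n},\chi_{\beta,m}\rangle| \geq L/\pi$.

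For the cross-talk I would use the upper bound~\eqref{eq:upperbound} to control each weight by $\tfrac{1}{2|\tfrac{m}{n}\alpha'-\beta|_m}$, then count elements of $\Gamma\setminus\{\alpha\}$ by their $\Z_n$-distance to $\alpha$. Ordering these elements along the cycle with $\alpha$ as origin, the pairwise-separation hypothesis within $\Gamma$ forces by induction the $k$-th closest on each of the two sides of $\alpha$ to be at $\Z_n$-distance $> kr$, so at each level $k\geq 1$ at most two elements contribute. Converting to $\Z_m$ via $|\tfrac{m}{n}\alpha'-\beta|_m \geq |\alpha-\alpha'|_n - \tfrac{1}{2} \geq (1-\tfrac{1}{2r})|\alpha-\alpha'|_n$ (using $m/n\geq 1$ and $r\geq 20$, which holds whenever $|\Gamma|\geq 2$) reduces the cross-talk to a harmonic-type sum of shape $\tfrac{CL(1+\log(|\Gamma|/2))}{r}$ times a constant close to $1$; with $r=20C(\log(|\Gamma|/2)+1)$ this is bounded by $L/19$.

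For the noise, I bound $|\widehat{f_{\;}}(\alpha')|\leq\tau$ uniformly and estimate the total interference weight by grouping $\alpha'\in\Z_n\setminus\{\alpha\}$ by $|\alpha-\alpha'|_n=k$: at most two contributions of size $\leq \tfrac{1}{2k-1}$ occur at each level, giving
\[
\sum_{\alpha'\neq\alpha}\bigl|\langle\widetilde{\chi}_{\alpha',n},\chi_{\beta,m}\rangle\bigr|\,\leq\, 2\sum_{k=1}^{\lfloor n/2\rfloor}\tfrac{1}{2k-1}\,\leq\, 3+2\log n,
\]
so the hypothesis $\tau<L/[20(3+2\log n)]$ yields noise $<L/20$, and combining all three pieces closes the proof. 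The main obstacle is the cross-talk estimate: the crude bound $|\tfrac{m}{n}\alpha'-\beta|_m \geq kr/2$ only produces $L/10$, and $L/\pi-L/10-L/20<L/5$ would leave no room. Exploiting $r\geq 20$ to promote the constant $\tfrac{1}{2}$ to nearly $1$, together with the two-per-level counting above, is precisely what supplies the missing slack.
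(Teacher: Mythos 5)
Your proposal follows essentially the same route as the paper's own proof: the same three-way split of $\widehat{\widetilde{f_{\;}}}(\beta)$ into main term, cross-talk over $\Gamma\setminus\{\alpha\}$, and noise over $\Z_n\setminus\Gamma$, with the main term bounded below by $L/\pi$ via Lemma~\ref{lemma:bound_character} and the other two controlled by harmonic-type sums using the $1/(2|\tfrac{m}{n}\alpha'-\beta|_m)$ weight bound. Your treatment is in fact slightly more careful than the paper's sketch in accounting for the $\pm\tfrac{1}{2}$ rounding loss when passing from $|\alpha-\alpha'|_n$ to $|\tfrac{m}{n}\alpha'-\beta|_m$, but this is a detail-level refinement rather than a different argument.
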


\begin{proof}[Proof sketch]
Let $\gamma\in\Gamma$ and $\beta = \lfloor \tfrac{m}{n}\gamma \rceil$. Then by the reverse triangle inequality
\begin{equation*}
\begin{split}
\big|\widehat{\widetilde{f_{\;}}}(\beta)\big|
&= \Bigg| \sum_{\alpha\in\Z_n}\widehat{f}(\alpha) \langle \widetilde{\chi}_{\alpha, n} , \chi_{\beta, m} \rangle \Bigg| \\
& \geq \left|\Big|\widehat{f}(\gamma)\langle \widetilde{\chi}_{\gamma, n} , \chi_{\beta, m} \rangle\Big| - \Bigg|\sum_{\gamma \neq \alpha\in\Gamma}\widehat{f}(\alpha) \langle \widetilde{\chi}_{\alpha, n} , \chi_{\beta, m} \rangle\Bigg| - \Bigg|\sum_{\alpha\notin\Gamma}\widehat{f}(\alpha) \langle \widetilde{\chi}_{\alpha, n} , \chi_{\beta, m} \rangle\Bigg|\right|  .
\end{split}
\end{equation*}

By Lemma \ref{lemma:bound_character} the first term satisfies $\Big|\widehat{f}(\gamma)\langle \widetilde{\chi}_{\gamma, n} , \chi_{\beta, m} \rangle\Big| \geq \frac{2}{\pi}\frac{n}{m}L$. By hypothesis $n/m \geq 1/2$ so $\Big|\widehat{f}(\gamma)\langle \widetilde{\chi}_{\gamma, n} , \chi_{\beta, m} \rangle\Big| \geq \frac{1}{\pi}L$. Similar to the proof of Proposition~\ref{prop:1-sparse}, we have for the second term
\begin{equation*}
\Bigg|\sum_{\gamma \neq \alpha\in\Gamma}\widehat{f}(\alpha) \langle \widetilde{\chi}_{\alpha, n} , \chi_{\beta, m} \rangle\Bigg|
\leq CL \sum_{\gamma \neq \alpha\in\Gamma} |\langle \widetilde{\chi}_{\alpha, n} , \chi_{\beta, m} \rangle|
\leq \frac{CL}{2} \sum_{\gamma \neq \alpha\in\Gamma} \frac{1}{|\frac{m}{n}\alpha-\beta|_m} \, .
\end{equation*}
Since $\Gamma$ is $r$-spread we know that $\sum_{\gamma \neq \alpha\in\Gamma} \frac{1}{|\frac{m}{n}\alpha-\beta|_m}\leq 2 ( \frac{1}{r} + \frac{1}{2r} + \frac{1}{3r}\cdots )$ so
\begin{equation*}
\Bigg|\sum_{\gamma \neq \alpha\in\Gamma}\widehat{f}(\alpha) \langle \widetilde{\chi}_{\alpha, n} , \chi_{\beta, m} \rangle\Bigg|
 \leq CL \sum_{x=1}^{|\Gamma|/2} \frac{1}{xr} \leq \frac{CL}{r} (\log(|\Gamma|/2)+1)
 = \frac{1}{20}L \, .
\end{equation*}
where we use the fact that $\sum_{x=1}^N \frac{1}{x} < \log(N)+1$ in the second to last inequality.
Similarly, we can bound the third term by $\frac{1}{20}L$ using $|\widehat{f_{\;}}(\alpha)|\leq \tau$ for all $\alpha\not\in \Gamma$. Then $\big|\widehat{\widetilde{f_{\;}}}(\beta)\big| \geq \frac{1}{\pi}L - \frac{1}{20}L - \frac{1}{20}L \geq \frac{1}{5}L $.
\end{proof}

A fully detailed proof can be found in the master's thesis of the first-named author \cite{Joel-thesis}.

\section{Applications}

We give some applications of our results.
In Section \ref{sec:algorithms} we show that our results can be turned into an algorithm to recover the large coefficients of a function $f:\Z_n\to\C$ using the large coefficients of the corresponding function $\widetilde{f}:\Z_{2^k}\to\C$.
Algorithms for finding large coefficients on the latter domain have been highly optimized and are generally simpler than the algorithms that work over any domain.

In Section \ref{sec:conc_proof} we give a method to prove that a family of functions is concentrated by showing that a subfamily is concentrated.
In particular, we show that the $i$-th bit functions are concentrated.

\subsection{An algorithm for finding large coefficients}\label{sec:algorithms}
We sketch how our results can be used to reduce the problem of finding the large coefficients of $f:\Z_n\to \C$ to the case where $n$ is an integer power of 2.

Consider the following computational problem: given a function $f:\Z_n\to\C$ and some threshold $\tau>0$, recover all coefficients of $f$ that satisfy $|\widehat{f}(\alpha)|^2>\tau$.
In other words, the goal is to produce a set $\Gamma$ such that $\alpha\in\Gamma$ if and only if $|\widehat{f}(\alpha)|^2>\tau$.
One way of finding these coefficients would be to calculate all the Fourier coefficients and then discard those with small magnitude.
Of course, this seems wasteful if there are only a small number of large coefficients.
There are algorithms which solve this problem in time $\mathrm{poly}(\log(n), 1/\tau, \|f\|_{\infty})$ for any domain $\Z_n$, yet the case where $n=2^k$ has received the most attention in the literature.
We refer to \cite{Fourier_survey} for a recent survey on these algorithms and their practical applications. We use the name \emph{sparse Fourier transform} (SFT) to refer to these algorithms.

We now present a new approach to this problem: instead of directly finding the large Fourier coefficients of $f:\Z_n\to \C$ we can define $\widetilde{f}:\Z_{2^k}\to \C$ and use an existing SFT algorithm to find the large coefficients of $\widetilde{f}$.
Our results show that whenever $f$ has large coefficients, also $\widetilde{f}$ has large coefficients.
Moreover, since the proofs are constructive, if we know the set of large coefficients of $\widetilde{f}$ then this restricts where the large coefficients of $f$ can be.

We only present an overview of the algorithm; further details can be find in the appendix. Before we describe the algorithm we mention that one can efficiently estimate the magnitude of the $\alpha$-th coefficient of a function $f$ by computing
\[ \left| \frac{1}{k}\sum_{i=1}^k f(x_i) \overline{\chi_\alpha(x_i)}\right| \, ,\]
for $x_i\in\Z_n$ chosen independently and uniformly at random. This is standard and follows from Chernoff's bound. The size of $k$ depends on the required approximation, and can be fairly small for our needs.

\medskip
\noindent\textbf{Main idea.}
From our analysis above, we know that the large Fourier coefficients $\widetilde{f}$ are those which are close to $\lfloor \frac{m}{n}\alpha \rceil$ where $|\widehat{f}(\alpha)|$ is large.
Therefore, if $|\widehat{\widetilde{f_{\;}}}(\beta)|$ is large we can search in a small interval around $\lfloor \frac{n}{m}\beta \rceil$ to find the large coefficients of $f$.

\medskip
\noindent\textbf{Overview.} Given a function $f:\Z_n\to\C$ and some threshold $\tau>0$, we define $\widetilde{f}$ over some desired domain $\Z_m$ for $m>n$.
We say that a coefficient $\widehat{f}(\alpha)$ is \emph{$\tau$-heavy} if $|\widehat{f}(\alpha)|^2>\tau$.
If $f$ has $\tau$-heavy coefficients then $\widetilde{f}$ has $\tau'$-heavy coefficients for some $\tau'>0$.
One recovers a set $\Gamma'$ of the $\tau'$-heavy coefficients of $\widetilde{f}$ using an SFT.

Suppose $\beta \in \Gamma'$, and recall that $\widehat{\widetilde{f_{\;}}}(\beta) = \sum_{\alpha \in \Z_n} \widehat{f}(\alpha) \langle \widetilde{\chi}_{\alpha, n} , \chi_{\beta, m} \rangle$.
The set of coefficients $\widehat{f}(\alpha)$ that significantly contribute to the magnitude of $\widehat{\widetilde{f_{\;}}}(\beta)$ belongs to an interval of (small) size $r$ around $\widehat{f}(\lfloor \frac{n}{m}\beta \rceil)$.
Hence, by approximating the coefficients in this interval, one can recover all the $\tau$-heavy coefficients (in fact, all the large coefficients) that significantly contribute to the magnitude of $\widehat{\widetilde{f_{\;}}}(\beta)$.
We see that by translating back to $\Z_n$ for each $\tau'$-heavy coefficient of $\widetilde{f}$, it is possible to find large coefficients of $f$.

\medskip
\noindent\textbf{Issues with coefficient cancellation.}
However, we need to keep in mind that some large coefficient of $f$ may have been ``canceled out'' in $\widetilde{f}$ as described in Section \ref{sec:coeff-cancel}.
This can be overcome, for example, by extending the search space a little bit more or by subtracting all the known large coefficients from $f$, and iterating.

Another way to overcome the problem of coefficients being ``cancelled out'' is to use the scaling property of the Fourier transform, namely if $c\in\Z^*_n$ and $h(x)=f(cx)$ then $\widehat{h}(\alpha) = \widehat{f}(c^{-1}\alpha)$. The scaling property of the Fourier transform can be used to first permute all the coefficients of $f$ by defining the function $h$ for some randomly chosen $c\in \Z_n^*$.
With high probability the large coefficients of $h$ are spread apart and so recovering them from the large coefficients of $\widetilde{h}$ is easy using Proposition \ref{prop:concentrated_apart}.
We simply find $\Gamma'$ (the set of all Fourier coefficients of $\widetilde{h}$ that are larger than $\tau'$) and isolate those $\beta$ for which $|\widehat{\widetilde{h_{\;}}}(\beta)|$ is the largest is some small interval (if there are any close $\tau'$-heavy coefficients in $\Gamma'$). Then for every such $\beta$ we test to see which one of $|\widehat{h}(\lfloor \frac{n}{m}\beta \rfloor)|$ and $|\widehat{h}(\lceil \frac{n}{m}\beta \rceil)|$ is large.

Once this is done, we can multiply each coefficient by $c^{-1}$ to recover the $\tau$-heavy coefficients of the original function $f$.

\subsection{Proving concentration}\label{sec:conc_proof}

As a second application we use Theorem \ref{thm:concentrated} to give a simple proof that a family of functions is concentrated by considering a concentrated subfamily.

\begin{theorem}
Consider a family of functions $\famlyF = \{ f_{2^k}:\Z_{2^k} \to \C \}_{k\in\N}$ and define the family $\famlyF' = \{ f_n:\Z_n \to \C \}_{n\in\N}$, where for each $2^{k-1} < n \leq 2^k$ we let $f_n(x) := f_{2^k}(x)$ for every $x\in\Z_n$.
If $\famlyF$ is concentrated then $\famlyF'$ is concentrated.
\end{theorem}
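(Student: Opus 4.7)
The plan is to apply Theorem \ref{thm:concentrated} essentially directly, after a routine reindexing. The key observation is that for every $n\in\N$, if we let $k=k(n)$ denote the unique integer with $2^{k-1}<n\leq 2^k$, then $f_n:\Z_n\to\C$ is precisely the function $\widetilde{f_{2^k}}$ obtained by applying Definition~1 to $f_{2^k}:\Z_{2^k}\to\C$ with target domain $\Z_n$: since $\min(2^k,n)=n$, we have $\widetilde{f_{2^k}}(x)=f_{2^k}(x)=f_n(x)$ for $0\leq x<n$. Moreover, $n>2^{k-1}=2^k/2$, so the hypothesis $m\geq n/2$ of Theorem \ref{thm:concentrated} is satisfied for every $n$ with room to spare.

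To actually invoke the theorem, I would reindex the source family by $n$: set $g_n:=f_{2^{k(n)}}$ and sequences $N_n:=2^{k(n)}$, $M_n:=n$. Concentration of $\famlyF$ with polynomial $P(x,y)$ yields concentration of $\{g_n:\Z_{N_n}\to\C\}_{n\in\N}$ with the same polynomial, since the bound $|\Gamma_{k(n)}|\leq P(k(n),1/\epsilon)=P(\log N_n,1/\epsilon)$ transfers unchanged under the reindexing. Theorem \ref{thm:concentrated} then produces a polynomial $P'$ and sets $\Gamma_n'\subseteq\Z_{M_n}$ with $|\Gamma_n'|\leq P'(\log M_n,1/\epsilon)$ and $\|\widetilde{g_n}-\widetilde{g_n}|_{\Gamma_n'}\|_2^2<\epsilon$; but $\widetilde{g_n}=f_n$, so this is exactly the statement that $\famlyF'$ is concentrated.

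The one subtlety to keep in mind is the hypothesis $\|f_k\|_2^2\leq Q(\log n_k)$ that appears in Theorem \ref{thm:concentrated} but is not explicitly carried in the present statement. I would either read it as an implicit part of the hypothesis (natural for the applications the paper has in mind, e.g.\ functions bounded by a constant), or add it explicitly; granted this bound, it transfers to $\{g_n\}$ with the same polynomial $Q$ since $\log N_n=k(n)\leq\log_2 n+1$, and the theorem applies. Apart from this bookkeeping, the proof consists entirely of unfolding definitions and quoting Theorem \ref{thm:concentrated}, so there is no real mathematical obstacle: all the work was already done in establishing that the tilde operator preserves concentration whenever $m\geq n/2$.
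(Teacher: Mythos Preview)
Your proposal is correct and follows essentially the same approach as the paper: reindex the family $\famlyF$ by $n$ (the paper calls the reindexed family $\famlyG$), observe that $f_n=\widetilde{g_n}$ with the ratio of domain sizes in the allowed range, and invoke Theorem~\ref{thm:concentrated}. Your observation about the missing norm hypothesis $\|f_k\|_2^2\leq Q(\log n_k)$ is legitimate and in fact sharper than the paper, which applies Theorem~\ref{thm:concentrated} without commenting on this point; as you say, for the intended applications (e.g.\ bit functions, which are bounded) it is automatically satisfied.
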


\begin{proof}
Consider the family $\famlyG=\{g_n:\Z_{m_n}\to\C\}_{n\in\N}$ where for each $2^{k-1} < n \leq 2^k$ we let $m_n=2^k$ and define $g_n:=f_{2^k}$.
Since $\famlyG$ and $\famlyF$ contain the same functions, $\famlyG$ is concentrated (with the same polynomial for which $\famlyF$ is concentrated).
Note that for each $n$, we have that $f_n = \widetilde{g}_n$ with $1\leq\frac{n}{m_n}<2$.
From Theorem \ref{thm:concentrated}, since $\{g_n:\Z_{m_n}\to\C\}_{n\in\N}$ is concentrated it follows that $\{ f_n:\Z_n \to \C \}_{n\in\N} = \{ \widetilde{g}_n:\Z_n \to \C \}_{n\in\N}$ is concentrated.
\end{proof}

\begin{remark}
The family $\famlyF$ is taken over powers of $2$ to be consistent with Theorem \ref{thm:concentrated} above, but it could be generalised to other subfamilies.
\end{remark}

We now use this theorem to (re)prove that the (family of) $i$-th bit function is concentrated.
This was first proved in \cite{MR} in a quite complicated work.
Our proof is simple and follows from the following result on the subfamily of functions with domain $\Z_{2^k}$.

\begin{lemma}\label{lemma:bits}
Let $k \in \N$ and $0 \le i < k$.
Define $\bit_i : \Z_{2^k} \to \{-1,1\}$ by
$\bit_i(x) = (-1)^{x_i}$ where $x = \sum_{j=0}^{k-1} x_j 2^j$ and $x_j \in \{ 0,1 \}$.
Let $\alpha \in \Z_{2^k}$.
Then $\widehat{\bit_i}(\alpha) = 0$ unless $\alpha$ is an odd multiple of $2^{k-i-1}$ in which case $| \widehat{\bit_i}(\alpha)| = O( 2^{k-i}/|\alpha|_{2^k} )$.
\end{lemma}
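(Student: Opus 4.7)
The plan is to exploit the product structure of the exponential factors by splitting the binary expansion of $x$ into the bits below position $i$, the bit at position $i$, and the bits above position $i$. Writing $x = a \cdot 2^{i+1} + x_i \cdot 2^i + b$ with $0 \le a < 2^{k-i-1}$, $x_i \in \{0,1\}$ and $0 \le b < 2^i$, the formula for $\widehat{\bit_i}(\alpha) = \frac{1}{2^k}\sum_{x} (-1)^{x_i} \omega_{2^k}^{-\alpha x}$ factors as a product of three independent geometric sums, one over each of $a$, $x_i$ and $b$.

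First I would deal with the $a$-sum. By Lemma~\ref{lemma:weyl} (or direct calculation), the sum $\sum_{a=0}^{2^{k-i-1}-1} \omega_{2^k}^{-\alpha a 2^{i+1}}$ equals $2^{k-i-1}$ when $\omega_{2^k}^{-\alpha 2^{i+1}}=1$ (equivalently, $2^{k-i-1} \mid \alpha$) and vanishes otherwise. So $\widehat{\bit_i}(\alpha)=0$ unless $\alpha = c \cdot 2^{k-i-1}$ for some integer $c$. Next I would look at the $x_i$-sum: $\sum_{x_i \in \{0,1\}} (-1)^{x_i} \omega_{2^k}^{-\alpha x_i 2^i} = 1 - e^{-\pi i c}$ (using $\alpha 2^i / 2^k = c/2$). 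This vanishes when $c$ is even and equals $2$ when $c$ is odd, so the only surviving coefficients occur precisely at the odd multiples $\alpha = c \cdot 2^{k-i-1}$, establishing the first claim.

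For such $\alpha$, the remaining factor is $\frac{1}{2^k} \cdot 2^{k-i-1} \cdot 2 \cdot S_b = \frac{1}{2^i} S_b$, where $S_b = \sum_{b=0}^{2^i - 1} \omega_{2^k}^{-\alpha b}$. Applying Lemma~\ref{lemma:weyl} to $S_b$ yields $|S_b| = \bigl|\sin(\pi \alpha 2^i / 2^k) / \sin(\pi \alpha / 2^k)\bigr|$; bounding the numerator by $1$ and applying Lemma~\ref{lemma:1/sin} to the denominator with $k$ replaced by $2^k$ gives $|S_b| \le 2^{k}/(2|\alpha|_{2^k})$. Combining,
\[
|\widehat{\bit_i}(\alpha)| \;\le\; \frac{1}{2^i} \cdot \frac{2^{k}}{2|\alpha|_{2^k}} \;=\; \frac{2^{k-i-1}}{|\alpha|_{2^k}} \;=\; O\!\left(\frac{2^{k-i}}{|\alpha|_{2^k}}\right),
\]
which is the desired bound.

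There is no genuine obstacle here; the proof is essentially a disciplined bookkeeping of a separable sum. The only places where care is needed are in keeping track of which factor kills which coefficient (the $a$-sum forces divisibility by $2^{k-i-1}$, the $x_i$-sum refines this to \emph{odd} multiples) and in correctly invoking Lemma~\ref{lemma:1/sin} with the substitution $k \mapsto 2^k$ and $x \mapsto \alpha$ to turn $1/|\sin(\pi\alpha/2^k)|$ into the stated bound in terms of $|\alpha|_{2^k}$.
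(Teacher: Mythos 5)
Your proof is correct and follows essentially the same route as the paper: the identical three-way split of $x$ into low bits, the $i$-th bit, and high bits, the same observation that the high-bit sum forces $2^{k-i-1}\mid\alpha$ and the $i$-th-bit factor $1-\omega_{2^k}^{-2^i\alpha}$ restricts to odd multiples, and the same bound on the remaining geometric sum via Lemma~\ref{lemma:weyl} and the $1/|\sin|$ estimate. Only the variable names and the harmless constant in the final bound differ.
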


\begin{proof}
Writing $N = 2^k$ we have $\widehat{\bit_i}(\alpha) = \tfrac{1}{N} \sum_{x=0}^{2^k-1} \bit_i(x) \omega_N^{-\alpha x}$.
We write $x = y + 2^i b + 2^{i+1} z$ where $0 \le y < 2^i$, $0 \le z < 2^{k - (i+1)}$ and where $b$ is the $i$-th bit.
Then
\begin{equation*}
\begin{split}
   \widehat{\bit_i}(\alpha)
   =& \frac{1}{N} \left( \sum_{y=0}^{2^{i}-1} \omega_N^{-\alpha y} \right) \left(\sum_{b=0}^1 (-1)^b\omega_N^{-\alpha2^ib} \right) \left( \sum_{z=0}^{2^{k-i-1}-1} \omega_N^{-2^{i+1} \alpha z} \right)\\
   & = \frac{1}{N} \left( \sum_{y=0}^{2^{i}-1} \omega_N^{-\alpha y} \right) \left( 1 - \omega_N^{-2^i \alpha} \right) \left( \sum_{z=0}^{2^{k-i-1}-1} \omega_N^{-2^{i+1} \alpha z} \right) \, .
\end{split}
\end{equation*}
The third sum is just a sum over all $2^{k-i-1}$-th roots of unity, so it is $2^{k-i-1}$ when $\alpha$ is a multiple of $2^{k-i-1}$ and otherwise is zero.
The middle term $(1 - \omega_N^{-2^i \alpha})$ is therefore 2 when $\alpha$ is an odd multiple of $2^{k-i-1} $ and is zero if it is an even multiple.
For the first sum, we know from Lemma~\ref{lemma:weyl} that
\begin{equation}\label{eq:bits}
\Bigg|\sum_{y=0}^{2^{i}-1} \omega_N^{-\alpha y}\Bigg|
= \Bigg|\frac{\sin\left(\frac{\pi}{N}\alpha(2^{i}-1)\right)}{\sin\left(\frac{\pi}{N}\alpha\right)}\Bigg|
\leq \frac{1}{\big|\sin\left(\frac{\pi}{N}|\alpha|_N\right)\big|} \leq \frac{N}{\pi |\alpha|_N} \, .
\end{equation}

The result then follows:
the Fourier coefficient $\widehat{f}(\alpha)$ is zero when $\alpha$ is not an odd multiple of $2^{k-i-1}$ and when it is non-zero it has magnitude bounded by $2^{k-i}/|\alpha|_{2^k}$.
\end{proof}

\begin{corollary}
For every $i\in \N$, the $i$-th bit functions over $\Z_{2^k}$, i.e. $\{\bit_i:\Z_{2^k} \to \{-1,1\} \}_{k>i}$, are concentrated.
\end{corollary}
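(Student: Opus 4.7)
The plan is to read off concentration directly from Lemma~\ref{lemma:bits}, which already gives us both the support and a sharp magnitude bound for the non-zero Fourier coefficients of $\bit_i$ on $\Z_{2^k}$. Since every non-zero coefficient lives at an odd multiple of $2^{k-i-1}$, I will parameterise these coefficients as $\alpha = s\cdot 2^{k-i-1}$ with $s$ odd in $\{1,3,\dots,2^{i+1}-1\}$; then $|\alpha|_{2^k}=\min(s,\,2^{i+1}-s)\cdot 2^{k-i-1}$, and substituting into $|\widehat{\bit_i}(\alpha)|=O(2^{k-i}/|\alpha|_{2^k})$ the factors of $2^{k-i}$ cancel, leaving
\[
|\widehat{\bit_i}(\alpha)|^2 \;=\; O\!\left(\frac{1}{\min(s,\,2^{i+1}-s)^2}\right).
\]
Crucially, this bound is independent of $k$, which is what will make concentration uniform in the domain size.

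For a threshold $T>0$ to be chosen momentarily, I will take $\Gamma_k$ to be the set of odd multiples of $2^{k-i-1}$ with $\min(s,\,2^{i+1}-s)\le T$ — i.e.\ the coefficients sitting in an interval of width $\approx T\cdot 2^{k-i-1}$ around $0$ and the symmetric interval around $2^k$. Each odd $s\in[1,T]$ contributes at most two such $\alpha$ (one in each interval), so $|\Gamma_k|\le T+1$, independent of $k$.

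The tail estimate then reduces to a convergent series:
\[
\|\bit_i - \bit_i|_{\Gamma_k}\|_2^2
\;=\; \sum_{\alpha\notin \Gamma_k}|\widehat{\bit_i}(\alpha)|^2
\;\le\; 2\!\!\sum_{\substack{s>T\\ s\text{ odd}}}\!\frac{C}{s^2}
\;=\; O(1/T).
\]
Given $\epsilon>0$, I will choose $T = \lceil C'/\epsilon\rceil$ for a suitable absolute constant $C'$ so that this tail is smaller than $\epsilon$. Then $|\Gamma_k|\le T+1 = O(1/\epsilon)$, which is polynomial in $1/\epsilon$ and (trivially) in $\log(n_k)=k$, verifying Definition~\ref{def:concentration} with a polynomial $P(x,y) = C'y+2$ that does not involve $x$ at all.

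There is no real obstacle; the lemma has already done the analytic work. The only care needed is in the bookkeeping: confirming that the $2^{k-i}/|\alpha|_{2^k}$ bound collapses, after substitution, into the $k$-free form $O(1/s^2)$, and observing that each odd $s$ corresponds to at most two distinct $\alpha\in\Z_{2^k}$ so that $|\Gamma_k|$ really is linear in $T$. Once those two points are made, concentration of $\{\bit_i:\Z_{2^k}\to\{-1,1\}\}_{k>i}$ follows immediately, and combined with the theorem from this subsection one also recovers concentration over every domain.
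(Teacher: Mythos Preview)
Your argument is correct and is exactly the natural way to deduce the corollary from Lemma~\ref{lemma:bits}; the paper itself gives no separate proof and treats the statement as immediate from that lemma, so you have simply spelled out the intended details. The only minor imprecision is the phrase ``each odd $s\in[1,T]$ contributes at most two such $\alpha$'': in fact each odd $s$ corresponds to exactly one $\alpha$, and the factor of two arises because both $s$ and $2^{i+1}-s$ satisfy $\min(s,2^{i+1}-s)\le T$ --- but your bound $|\Gamma_k|\le T+1$ is nonetheless correct.
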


\begin{corollary}
For every $i\in \N$, the $i$-th bit functions $\{\bit_i:\Z_n \to \{-1,1\} \}_{n>2^i}$ are concentrated.
\end{corollary}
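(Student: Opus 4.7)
The plan is to deduce this corollary by combining the preceding corollary (concentration on power-of-two domains) with the theorem from Section~\ref{sec:conc_proof} that upgrades concentration on a power-of-two subfamily to concentration on the full family indexed by all $n$.

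First I would fix $i\in\N$ and invoke the preceding corollary, which asserts that the subfamily $\famlyF := \{\bit_i:\Z_{2^k}\to\{-1,1\}\}_{k>i}$ is concentrated. Next, I would check that the setup of the Section~\ref{sec:conc_proof} theorem is satisfied with this choice of $\famlyF$: for each $n>2^i$ pick the unique $k$ with $2^{k-1}<n\le 2^k$ (so $k>i$), and observe that the function $\bit_i:\Z_n\to\{-1,1\}$ is defined by $\bit_i(x)=(-1)^{x_i}$ using the $i$-th binary digit of $x$, which is exactly $\bit_i(x)$ computed as an element of $\Z_{2^k}$ for every $x\in\{0,\dots,n-1\}$. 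Hence the family $\famlyF' := \{\bit_i:\Z_n\to\{-1,1\}\}_{n>2^i}$ matches the pointwise restriction $f_n(x):=f_{2^k}(x)$ prescribed in the hypothesis of that theorem (equivalently, $\bit_i$ on $\Z_n$ equals $\widetilde{g}_n$ where $g_n:=\bit_i$ on $\Z_{2^k}$, using that $\min(2^k,n)=n$ in Definition~1).

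Applying the theorem from Section~\ref{sec:conc_proof} then yields immediately that $\famlyF'$ is concentrated, which is precisely the statement of the corollary. A minor bookkeeping point is that the theorem is stated over all $k\in\N$ while $\famlyF$ only exists for $k>i$; this is harmless since only $n>2^i$ is considered in $\famlyF'$ and the corresponding $k$ automatically satisfies $k>i$.

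There is essentially no obstacle here: the heavy lifting has already been done by Lemma~\ref{lemma:bits} (via its corollary giving concentration on power-of-two domains) and by the power-of-two-to-all-$n$ theorem of Section~\ref{sec:conc_proof}, which in turn relies on Theorem~\ref{thm:concentrated}. The only thing to verify is the compatibility of the two definitions of $\bit_i$ across different domains, which is immediate from the bit-indexed definition.
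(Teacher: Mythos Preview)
Your proposal is correct and follows exactly the approach the paper intends: apply the preceding corollary (concentration of $\bit_i$ on power-of-two domains, derived from Lemma~\ref{lemma:bits}) and then invoke the theorem of Section~\ref{sec:conc_proof} to pass from the power-of-two subfamily to all $n>2^i$. The paper leaves this corollary without an explicit proof precisely because it is meant to be immediate from these two ingredients, and your verification of the compatibility of the $\bit_i$ definitions across domains, together with the index bookkeeping $k>i$, completes the argument.
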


\paragraph*{Acknowledgements} We thank our supervisor Steven Galbraith for helpful assistance.

\appendix

\section{Tightness of the bound in Theorem \ref{thm:e-concentrated}}
We refine Example \ref{ex:switch_down} as follows.
Let $B:\Z_m \to \{1, -1\}$ be chosen uniformly at random and consider the function
\[ f(x) := \left\{ \begin{array}{ll} B(x) & \text{ when } 0 \le x < m , \\
     \chi_{\alpha,n}(x) & \text{ when } m \le x < n. \end{array} \right. \]
Then,
\[ \widehat{f}(\alpha+j) = \frac{1}{n}\sum_{x\in\Z_n}f(x)\overline{\chi_{\alpha+j}(x)} = \frac{1}{n}\sum_{0\leq x<m}B(x)\overline{\chi_{\alpha+j}(x)} + \frac{1}{n}\sum_{m\leq x<n}\omega_n^{-j x} \, . \]
One expects $\sum_{0\leq x<m}B(x)$ to be a random walk, and so with high probability $|\sum_{0\leq x<m}B(x)|<\sqrt{n/2}$, hence $|\frac{1}{n}\sum_{0\leq x<m}B(x)\overline{\chi_{\alpha+j}(x)}| < \frac{1}{\sqrt{2n}}$. The value $|\frac{1}{n}\sum_{m\leq x<n}\omega_n^{jx}|$ can be shown to be large for small odd $\pm j$, using Lemma \ref{lemma:weyl} (see also Eq. (\ref{eq:bits})), as well as for $j=0$.

For example the value $|\frac{1}{n}\sum_{m\leq x<n}\omega_n^{jx}|$ is $0.5$ for $j=0$ and using Lemma \ref{lemma:weyl}, it is about $0.636/2 = 0.318$ for $j=\pm1$. Thus, $|\widehat{f}(\alpha)| > 0.5 - \frac{1}{\sqrt{2n}}$ and $|\widehat{f}(\alpha \pm1)| > 0.318 - \frac{1}{\sqrt{2n}}$. Similarly, $|\widehat{f}(\alpha \pm3)| > 0.106 - \frac{1}{\sqrt{2n}}$ and $|\widehat{f}(\alpha \pm5)| > 0.063 - \frac{1}{\sqrt{2n}}$.

It is straightforward to show that given $\epsilon$, then for large enough $n$, we can approximate $f$ such that $\|f-f|_\Gamma\|_2^2 \leq 0.5 + \epsilon$, for a set $\Gamma$ of size that depends on $\epsilon^{-1}$. Specifically, $f$ is $(0.5 + \epsilon)$-concentrated for any $\epsilon>0$.

On the other hand, we have $\widetilde{f}(x) = B(x)$ and one expects this function to have no large coefficients (all coefficients bounded by $\frac{1}{\sqrt{m}}$ as shown above). We conclude that these large coefficients cancel most of each other's magnitude in the function $\widetilde{f}$.

From Theorem \ref{thm:e-concentrated} we know that $\widetilde{f}$ is $(2(0.5 + \epsilon)+\eta)$-concentrated for any $\eta>0$. Here the size of the set $\Gamma'$ on which $\widetilde{f}$ is concentrated depend on $\epsilon$ and on $\eta$. As we can set $\epsilon,\eta$ to be as small as we want, we have that $\widetilde{f}$ is $1$-concentrated, which trivially holds as $\widetilde{f}$ has norm $1$. Moreover, as it has no large coefficients at all, and as we show in the third example under Example \ref{ex:concentrted} above, with high probability it is not ($1-\epsilon'$)-concentrated for any $\epsilon'>0$. Therefore, this example shows that the bound given in Theorem \ref{thm:e-concentrated} is tight.


\section{Values for the algorithm}

Let $\Gamma\subset\Z_n$ be the set of all $\tau$-heavy coefficients of $f$, and suppose $\Gamma\neq\emptyset$.
Then $\| f - f|_\Gamma \|_2^2 = \|f\|_2^2 - \sum_{\alpha\in\Gamma}|\widehat{f}(\alpha)|^2 \leq \|f\|_2^2-\tau$.
By Proposition~\ref{prop:Gamma-concentrated}, there exists a set $\Gamma'\subset\Z_m$ with
\begin{equation}\label{eq:Gamma'}
|\Gamma'| \leq |\Gamma| (|\Gamma|\|f\|_2^2/\epsilon'+3)
\end{equation}
such that
\begin{equation}\label{eq:norm}
\|\widetilde{f} - \widetilde{f}|_{\Gamma'}\|_2^2 \leq \frac{n}{m}\|f\|_2^2 - \tau + \epsilon' + 2\sqrt{(1-\tau)\epsilon'} \, .
\end{equation}

We choose an appropriate $\epsilon'$ such that the right-hand side is non-trivial (for example, if $\tau < \frac{n}{m}\|f\|_2^2/2$ we can make the right-hand side equal $\frac{n}{m}\|f\|_2^2 - 2\tau$).
We calculate a suitable threshold $\tau'$ for the function $\widetilde{f}$.
Since $\|f\|_2^2 = \sum_{\alpha\in\Z_n}|\widehat{f}(\alpha)|^2$, then $f$ can have at most $\|f\|_2^2 / \tau$ coefficients larger than $\tau$. In other words,
\begin{equation}\label{eq:bound}
|\Gamma| \leq \|f\|_2^2 / \tau \, .
\end{equation}
From (\ref{eq:norm}) we have
\[ \|\widetilde{f}|_{\Gamma'}\|_2^2 = \|\widetilde{f}\|_2^2 - \|\widetilde{f} - \widetilde{f}|_{\Gamma'}\|_2^2  \geq \frac{n}{m}\|f\|_2^2-\frac{n}{m}\|f\|_2^2 + \tau - \epsilon' - 2\sqrt{(1-\tau)\epsilon'} = \tau - \epsilon' - 2\sqrt{(1-\tau)\epsilon'} \, . \]
It follows that in $\Gamma'$ there is at least one coefficient $\widehat{\widetilde{f_{\;}}}(\beta)$ such that
\[ |\widehat{\widetilde{f_{\;}}}(\beta)|^2 \geq \frac{\tau - \epsilon' - 2\sqrt{(1-\tau)\epsilon'}}{|\Gamma'|} \geq \frac{\tau - \epsilon' - 2\sqrt{(1-\tau)\epsilon'}}{(\|f\|_2^2 / \tau)\left((\|f\|_2^2 / \tau)\|f\|_2^2 / \epsilon' + 3\right)} \, , \]
using (\ref{eq:Gamma'}) and (\ref{eq:bound}).
We set $\tau'$ to be the latter value.

\end{document}